\newtheorem{thm}{Theorem}[section]
\newtheorem{lem}[thm]{Lemma}
\newtheorem{prop}[thm]{Proposition}
\theoremstyle{definition}
\newtheorem{defn}[thm]{Definition}
\newtheorem{ex}[thm]{Example}
\theoremstyle{remark}
\newtheorem{rem}[thm]{Remark}
\newcommand{\A}{\mathcal{A}}
\newcommand{\HH}{\mathfrak{H}}
\newcommand{\PP}{\mathcal{P}}
\newcommand{\Q}{\mathbb{Q}}
\newcommand{\Z}{\mathbb{Z}}
\newcommand{\sym}{\mathfrak{S}}
\newcommand{\sha}{\mathbin{\widetilde{\mathcyr{sh}}}}
\newcommand{\bigsha}{\mathop{\widetilde{\raisebox{-3pt}{\scalebox{2.5}{$\mathcyr{sh}$}}}}}
\newcommand{\bast}{\mathbin{\bar{*}}}
\DeclareMathOperator{\Bij}{Bij}
\newcommand{\ve}[1]{\boldsymbol{#1}}
\begin{document}
\title{Bowman-Bradley type theorem for finite multiple zeta values}
\author{Shingo Saito}
\address{Faculty of Arts and Science, Kyushu University,
 744, Motooka, Nishi-ku, Fukuoka, 819-0395, Japan}
\email{ssaito@artsci.kyushu-u.ac.jp}
\author{Noriko Wakabayashi}
\address{College of Science and Engineering, Ritsumeikan University,
 1--1--1, Nojihigashi, Kusatsu, Shiga, 525-8577, Japan}
\email{noriko-w@fc.ritsumei.ac.jp}
\keywords{Finite multiple zeta value, Bowman-Bradley theorem}
\subjclass[2010]{Primary 11M32; Secondary 05A19}
\begin{abstract}
 The multiple zeta values are multivariate generalizations of the values
 of the Riemann zeta function at positive integers.
 The Bowman-Bradley theorem asserts that the multiple zeta values
 at the sequences obtained by inserting a fixed number of twos
 between $3,1,\dots,3,1$ add up to a rational multiple of a power of $\pi$.
 We show that an analogous theorem holds in a very strong sense
 for finite multiple zeta values,
 which have been investigated by Hoffman and Zhao among others and recently recast by Zagier.
\end{abstract}
\maketitle

\section{Introduction}
\subsection{Finite multiple zeta values}
The \emph{multiple zeta values} and \emph{multiple zeta-star values}
are real numbers defined by
\begin{align*}
 \zeta(k_1,\dots,k_n)
 &=\sum_{m_1>\dots>m_n\ge1}\frac{1}{m_1^{k_1}\dotsm m_n^{k_n}},\\
 \zeta^{\star}(k_1,\dots,k_n)
 &=\sum_{m_1\ge\dots\ge m_n\ge1}\frac{1}{m_1^{k_1}\dotsm m_n^{k_n}}
\end{align*}
for positive integers $k_1,\dots,k_n$ with $k_1\ge2$.
They are generalizations of the values of the Riemann zeta function
at positive integers and are known to be related to
number theory, algebraic geometry, combinatorics, knot theory,
and quantum field theory among others.
Research on these numbers has mainly been focused
on their numerous linear and algebraic relations;
see for example \cite{Hoffman_survey,Zudilin}
and the references therein for an introduction.

Hoffman~\cite{Hoffman} and Zhao~\cite{Zhao} among others
developed a theory of modulo $p$ values,
for primes $p$, of the finite truncations of the above-mentioned infinite sums,
where the indices of summation are all restricted to be less than $p$.
Following an idea of Zagier~\cite{KZ}, we look at the truncations
in the ring $\A=(\prod_{p}\Z/p\Z)/(\bigoplus_{p}\Z/p\Z)$,
where $p$ runs over all primes;
the elements of $\A$ are of the form $(a_p)_p$, where $a_p\in\Z/p\Z$,
and two elements $(a_p)$ and $(b_p)$ are identified if and only if
$a_p=b_p$ for all but finitely many primes $p$.
Note that $\A$ is a $\Q$-algebra.
We shall simply write $a_p$ for $(a_p)$ since no confusion is likely.

\begin{defn}
 For positive integers $k_1,\dots,k_n$, we define
 \begin{align*}
  \zeta_{\A}(k_1,\dots,k_n)
  &=\sum_{p>m_1>\dots>m_n\ge1}\frac{1}{m_1^{k_1}\dotsm m_n^{k_n}}\in\A,\\
  \zeta_{\A}^{\star}(k_1,\dots,k_n)
  &=\sum_{p>m_1\ge\dots\ge m_n\ge1}\frac{1}{m_1^{k_1}\dotsm m_n^{k_n}}\in\A,
 \end{align*}
 and call them \emph{finite multiple zeta(-star) values} in this paper.
\end{defn}

The finite multiple zeta(-star) values
are similar to multiple zeta(-star) values in many respects
as we shall see in this paper.
They do, however, have some differences,
of which one of the most striking is the following:
\begin{prop}[{\cite[Theorem~4.3]{Hoffman}}]\label{prop:Riemann}
 We have $\zeta_{\A}(k)=\zeta_{\A}^{\star}(k)=0$ for all positive integers $k$.
\end{prop}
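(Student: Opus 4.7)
The plan is to show that for any fixed positive integer $k$, the congruence
\[
\sum_{m=1}^{p-1}\frac{1}{m^{k}}\equiv 0\pmod{p}
\]
holds for all sufficiently large primes $p$; since $\zeta_{\A}(k)$ and $\zeta_{\A}^{\star}(k)$ coincide (both are depth-one sums equal to the class of this single sum in $\A$), this will give both equalities simultaneously.

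The first step is to rewrite the sum. Since reduction modulo $p$ makes the map $m\mapsto m^{-1}$ a bijection of $(\Z/p\Z)^{\times}$, we have
\[
\sum_{m=1}^{p-1}\frac{1}{m^{k}}\equiv\sum_{m=1}^{p-1}m^{k}\pmod{p}.
\]
Thus the problem reduces to evaluating the power sum $S_k(p):=\sum_{m=1}^{p-1}m^{k}$ modulo $p$.

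The second step is the classical computation of $S_k(p)\pmod{p}$. Choose a primitive root $g$ modulo $p$; then $\{m\bmod p\}_{m=1}^{p-1}=\{g^{i}\bmod p\}_{i=0}^{p-2}$, so
\[
S_k(p)\equiv\sum_{i=0}^{p-2}g^{ki}\pmod{p}.
\]
If $(p-1)\nmid k$, then $g^{k}\not\equiv 1\pmod{p}$, and the geometric series evaluates to $(g^{k(p-1)}-1)/(g^{k}-1)\equiv 0\pmod{p}$ by Fermat's little theorem. For the fixed integer $k$, the condition $(p-1)\mid k$ fails for every prime $p>k+1$, so $S_k(p)\equiv 0\pmod{p}$ for all but finitely many primes.

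There is no genuine obstacle here; the only subtlety is remembering that equality in $\A$ requires only agreement for cofinitely many primes, which is exactly what the power-sum computation provides (the finitely many primes $p\le k+1$ are harmlessly discarded in $\A$). Combining the two steps yields $\zeta_{\A}(k)=\zeta_{\A}^{\star}(k)=0$.
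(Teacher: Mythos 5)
Your proposal is correct and follows essentially the same route as the paper: choose a primitive root, sum the resulting geometric series, and invoke Fermat's little theorem together with the observation that $(p-1)\nmid k$ once $p>k+1$, which suffices for equality in $\A$. The only cosmetic difference is your preliminary replacement of $\sum m^{-k}$ by $\sum m^{k}$ via the inversion bijection, whereas the paper sums the geometric series in $a^{-k}$ directly; both are equally valid.
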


\begin{proof}
 Let $p$ be an arbitrary prime larger than $k+1$.
 Taking a primitive root $a$ modulo $p$, we have
 \[
  \sum_{m=1}^{p-1}\frac{1}{m^k}
  \equiv\sum_{i=0}^{p-2}\frac{1}{a^{ik}}
  \equiv\frac{1-a^{-k(p-1)}}{1-a^{-k}}\equiv0\pmod{p}.
 \]
 Since we have proved that $\sum_{m=1}^{p-1}m^{-k}\equiv0\pmod{p}$
 for all but finitely many primes $p$,
 it follows that $\zeta_{\A}(k)=\zeta_{\A}^{\star}(k)=0$ in $\A$.
\end{proof}

\subsection{Bowman-Bradley theorem}
Bowman and Bradley~\cite{BB} proved that
the multiple zeta values at the sequences obtained by
inserting a fixed number of twos between $3,1,\ldots,3,1$
add up to a rational multiple of a power of $\pi$;
Kondo, Tanaka, and the first author~\cite{KST}
obtained the same result for multiple zeta-star values.
Let $\{a_1,\dots,a_l\}^m$ denote the $m$ times repetition
of the sequence $a_1,\dots,a_l$:
\[
 \{a_1,\dots,a_l\}^m=\underbrace{a_1,\dots,a_l,\dots,a_1,\dots,a_l}_{lm}.
\]
For the empty sequence $\emptyset$,
we conventionally set $\zeta(\emptyset)=\zeta^{\star}(\emptyset)=1$.

\begin{thm}[\cite{BB,KST}]
 For all nonnegative integers $m$ and $n$, we have
 \begin{align*}
  \sum_{\substack{\sum_{i=0}^{2m}n_i=n\\n_0,\ldots,n_{2m}\ge 0}}
  \zeta
  (\{2\}^{n_0},3,\{2\}^{n_1},1,\{2\}^{n_2},\ldots,3,\{2\}^{n_{2m-1}},1,\{2\}^{n_{2m}})
  &\in\Q\pi^{4m+2n},\\
  \sum_{\substack{\sum_{i=0}^{2m}n_i=n\\n_0,\ldots,n_{2m}\ge 0}}
  \zeta^{\star}
  (\{2\}^{n_0},3,\{2\}^{n_1},1,\{2\}^{n_2},\ldots,3,\{2\}^{n_{2m-1}},1,\{2\}^{n_{2m}})
  &\in\Q\pi^{4m+2n}.
 \end{align*}
\end{thm}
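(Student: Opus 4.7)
The plan is to establish both statements via the iterated-integral representation of multiple zeta(-star) values, combined with two classical evaluations: the generating function identity $\sum_{n\ge0}\zeta(\{2\}^n)y^{2n}=\sin(\pi y)/(\pi y)$ and the Broadhurst--Zagier formula $\zeta(\{3,1\}^m)=2\pi^{4m}/(4m+2)!$ (together with their starred analogues).

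For the unstarred statement, I would first write each MZV on the left-hand side as an iterated integral with $\omega_0=dt/t$ and $\omega_1=dt/(1-t)$, viewing a component ``$2$'' as the block $\omega_0\omega_1$, a ``$3$'' as $\omega_0\omega_0\omega_1$, and a ``$1$'' as $\omega_1$. The sum over all distributions of $n$ twos among the $2m+1$ slots between and around the ``$3,1$'' blocks then packages naturally into the generating function
\[
 F_m(y)=\sum_{n\ge0}y^{2n}\sum_{\substack{n_0+\cdots+n_{2m}=n\\n_0,\ldots,n_{2m}\ge 0}}\zeta(\{2\}^{n_0},3,\{2\}^{n_1},1,\ldots,3,\{2\}^{n_{2m-1}},1,\{2\}^{n_{2m}}),
\]
which, after regrouping, can be rewritten as an iterated integral on the standard simplex of dimension $4m$ with a $y$-dependent weight inserted at each ``$2$'' slot. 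The technical heart of \cite{BB} is to evaluate $F_m(y)$ in closed form using hypergeometric identities; the outcome is that $F_m(y)$ equals $\pi^{4m}$ times a power series in $\pi^2 y^2$ with rational coefficients, from which the coefficient of $y^{2n}$ lies in $\Q\pi^{4m+2n}$.

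The starred statement follows by the same strategy, using the iterated-integral representation appropriate for $\zeta^{\star}$ (with differential forms $\omega_0$ and $-\omega_0-\omega_1$ in the standard manner) and the corresponding evaluations for $\zeta^{\star}(\{2\}^n)$ and $\zeta^{\star}(\{3,1\}^m)$; this is the content of \cite{KST}.

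In both cases, the main obstacle is the closed-form evaluation of $F_m(y)$ and its starred analogue: the combinatorial step that recasts the sum as an iterated integral is essentially bookkeeping, whereas identifying the resulting integral explicitly as a product of elementary transcendental functions evaluated at rational multiples of $\pi y$ requires the non-trivial hypergeometric manipulations of \cite{BB} (or the alternative shuffle-theoretic approach of \cite{KST}). Once that closed form is established, the desired rationality statement is immediate by comparing homogeneity in $y$ and $\pi$.
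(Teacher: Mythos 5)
The paper does not prove this statement: it is imported from \cite{BB} and \cite{KST} as background motivation for the finite analogue, so there is no internal proof to compare yours against. Judged on its own terms, your proposal has a genuine gap rather than a complete argument: the entire mathematical content is concentrated in the single step you defer to the references, namely the assertion that $F_m(y)$ equals $\pi^{4m}$ times a power series in $\pi^2y^2$ with rational coefficients. That assertion is precisely the theorem to be proved, rewritten as a statement about a generating function; everything before it is, as you acknowledge, bookkeeping, and everything after it is immediate. A proof by citation would be perfectly acceptable here (it is what the authors themselves do), but then the iterated-integral and generating-function scaffolding does no work and should not be presented as though it reduced the problem.

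Two further points. First, the route you attribute to \cite{BB} is not the one that reference actually takes: Bowman and Bradley do not obtain a closed-form hypergeometric evaluation of the two-parameter generating function. Their proof is a combinatorial identity in the shuffle algebra on the alphabet $\{\omega_0,\omega_1\}$, expressing the sum of the words $(\omega_0\omega_1)^{n_0}\omega_0^2\omega_1^2(\omega_0\omega_1)^{n_1}\dotsm$ as an explicit rational multiple of shuffle products whose zeta values are already known to lie in $\Q\pi^{4m+2n}$; hypergeometric ${}_2F_1$ manipulations enter only in the classical evaluation of the boundary case $\zeta(\{3,1\}^m)$. Similarly, \cite{KST} handle the star case by an algebraic reduction to the non-star case rather than by a direct integral representation of $\zeta^{\star}$. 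So a reader following your outline would not find the missing step where you send them. Second, a small but real slip: since $\zeta(\{2\}^n)=\pi^{2n}/(2n+1)!$, one has $\sum_{n\ge0}\zeta(\{2\}^n)y^{2n}=\sinh(\pi y)/(\pi y)$, whereas $\sin(\pi y)/(\pi y)=\sum_{n\ge0}(-1)^n\zeta(\{2\}^n)y^{2n}$; this does not affect the rationality conclusion but the identity as you state it is false.
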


The theorem is a common generalization of the previously known results that
\[
 \zeta(\{3,1\}^m),\zeta^{\star}(\{3,1\}^m)\in\Q\pi^{4m},\qquad
 \zeta(\{2\}^n),\zeta^{\star}(\{2\}^n)\in\Q\pi^{2n}
\]
for all nonnegative integers $m$ and $n$.

For finite multiple zeta(-star) values,
Hoffman \cite[Equation (15)]{Hoffman} proved that
\[
 \zeta_{\A}(\{c\}^n)=\zeta_{\A}^{\star}(\{c\}^n)=0
\]
for all positive integers $c$ and $n$,
and Zhao \cite[Theorem~3.18]{Zhao} proved that
\[
 \zeta_{\A}(\{a,b\}^m)=\zeta_{\A}^{\star}(\{a,b\}^m)=0
\]
for all positive integers $m$,
of which the special case $a=3$ and $b=1$ was conjectured by Kaneko \cite{Kaneko}.
Our aim in this paper is to generalise Zhao's results
by giving the following Bowman-Bradley type theorem,
which is a corollary of our main theorem:
\begin{thm}\label{thm:BB_modp}
 If $a$ and $b$ are odd positive integers and
 $c$ is an even positive integer,
 then for all nonnegative integers $m$ and $n$ with $(m,n)\ne(0,0)$,
 we have
 \begin{align*}
  &\sum_{\substack{\sum_{i=0}^{2m}n_i=n\\n_0,\ldots,n_{2m}\ge 0}}
  \zeta_{\A}
  (\{c\}^{n_0},a,\{c\}^{n_1},b,\{c\}^{n_2},\ldots,a,\{c\}^{n_{2m-1}},b,\{c\}^{n_{2m}})\\
  &=\sum_{\substack{\sum_{i=0}^{2m}n_i=n\\n_0,\ldots,n_{2m}\ge 0}}
  \zeta_{\A}^{\star}
  (\{c\}^{n_0},a,\{c\}^{n_1},b,\{c\}^{n_2},\ldots,a,\{c\}^{n_{2m-1}},b,\{c\}^{n_{2m}})\\
  &=0.
 \end{align*}
\end{thm}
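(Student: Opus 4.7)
The plan is to deduce Theorem~\ref{thm:BB_modp} from the harmonic (stuffle) product identity
\[
 \zeta_\A(\{a,b\}^m) \bast \zeta_\A(\{c\}^n) = T_{m,n}(a,b,c) + R_{m,n}(a,b,c),
\]
where
\[
 T_{m,n}(a,b,c) := \sum_{\substack{n_0 + \cdots + n_{2m} = n \\ n_i \ge 0}} \zeta_\A(\{c\}^{n_0}, a, \{c\}^{n_1}, b, \ldots, a, \{c\}^{n_{2m-1}}, b, \{c\}^{n_{2m}})
\]
is the quantity to be shown zero and $R_{m,n}(a,b,c)$ collects the quasi-shuffle corrections in which some $c$'s are merged with adjacent $a$'s or $b$'s. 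A pleasant feature of this set-up is that the pure-shuffle part is \emph{exactly} $T_{m,n}(a,b,c)$ with no multinomial coefficients, since the $a,b$-letters of $\{a,b\}^m$ appear in a fixed order and all $c$'s are the same integer, so each target summand arises from a unique shuffle. The left-hand side vanishes by the theorems of Zhao ($\zeta_\A(\{a,b\}^m) = 0$) and Hoffman ($\zeta_\A(\{c\}^n) = 0$) recalled in the introduction, reducing the problem to showing $R_{m,n}(a,b,c) = 0$.

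The correction $R_{m,n}(a,b,c)$ is a sum of $\zeta_\A$'s whose index sequences are obtained by replacing one or more $a$-$c$ or $b$-$c$ pairs with single merged letters $a+c$ or $b+c$. Under the parity hypothesis these merged letters are again odd, each summand has even total weight, and the reversal relation
\[
 \zeta_\A(k_r, \ldots, k_1) = (-1)^{k_1 + \cdots + k_r}\, \zeta_\A(k_1, \ldots, k_r)
\]
leaves each term invariant. I would then prove $R_{m,n}(a,b,c) = 0$ by double induction on $(m, n)$: grouping the corrections by which $a$'s and $b$'s have been merged, each group should reorganise either into a sum of type $T_{m', n'}(a', b', c)$ with strictly smaller $m' + n'$ and with new odd parameters $a' \in \{a, a+c\}$, $b' \in \{b, b+c\}$ in the same alternating pattern, or into a product involving $\zeta_\A(\{c\}^k) = 0$. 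The induction closes with the base cases $T_{m,0} = \zeta_\A(\{a,b\}^m) = 0$ and $T_{0,n} = \zeta_\A(\{c\}^n) = 0$, and the two-index instance $T_{1,1}(a,b,c) = \zeta_\A(a+c,b) + \zeta_\A(a,b+c) + \text{(three pure shuffles)}$ — which vanishes because every two-index finite multiple zeta value of even weight is zero, as follows from stuffle and reversal combined with Proposition~\ref{prop:Riemann} — illustrates the mechanism concretely.

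The star version follows by the same mechanism applied to the harmonic product of $\zeta_\A^\star$'s, or alternatively by expanding each summand of the star-sum via the standard identity $\zeta_\A^\star(k_1,\ldots,k_r) = \sum \zeta_\A(\cdot)$ summed over coarsenings of $(k_1,\ldots,k_r)$ and regrouping. The principal obstacle I foresee is the combinatorial control over $R_{m,n}(a,b,c)$: the number of merging patterns grows rapidly with $n$, the merged indices have mixed shape, and the induction closes only if these terms genuinely reorganise into smaller $T_{m',n'}$'s. A clean resolution will likely require a generating-function reformulation or a Hopf-algebra-style packaging of the quasi-shuffle expansion, rather than a brute-force term-by-term cancellation.
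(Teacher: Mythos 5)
Your starting identity is sound and is in fact the same device the paper uses for its induction on $n$ (Lemma~\ref{lem:step1}): applying $Z_{\A}$ to $(z_az_b)^m*z_c^n$ gives $\zeta_{\A}(\{a,b\}^m)\zeta_{\A}(\{c\}^n)=0$ on one side and $T_{m,n}(a,b,c)+R_{m,n}(a,b,c)$ on the other, and your observation that the pure-shuffle part carries no extra multiplicities is correct. But the obstacle you flag at the end is a genuine gap, and it is fatal to the induction as you have set it up: the correction terms in $R_{m,n}(a,b,c)$ do not reorganise into sums of the uniform shape $T_{m',n'}(a',b',c)$. Grouping the merged terms by how many $a$'s and how many $b$'s have absorbed a $c$, what you obtain (up to factorials) is
\[
 \sum_{\substack{n_0+\dots+n_{2m}=n-j_a-j_b\\ n_i\ge0}}
 \zeta_{\A}(\{c\}^{n_0},a_1',\{c\}^{n_1},b_1',\dots,a_m',\{c\}^{n_{2m-1}},b_m',\{c\}^{n_{2m}}),
\]
summed over all placements of $j_a$ letters $a+c$ among the $a$-slots and $j_b$ letters $b+c$ among the $b$-slots. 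Unless $j_a\in\{0,m\}$ and $j_b\in\{0,m\}$, the odd entries are genuinely mixed, so no instance of your inductive hypothesis applies; your $T_{1,1}$ example hides this because at depth at most two everything of even weight already vanishes by stuffle plus reversal.

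The repair is to strengthen the statement being proved, and this is exactly the paper's main theorem (Theorem~\ref{thm:main}): allow arbitrary odd $a_1,\dots,a_m,b_1,\dots,b_m$ and even $c_1,\dots,c_n$, but symmetrise over $\sym_m\times\sym_m$ in the $a$'s and $b$'s separately. The grouped correction terms above land precisely on such symmetrised sums (because your $a$'s are all equal, summing over which of them get merged is that symmetrisation), so the induction on $n$ then closes; this is Lemma~\ref{lem:step1}. The price is that the base case $n=0$ is no longer Zhao's theorem: one must show $\sum_{\sigma,\tau\in\sym_m}\zeta_{\A}(a_{\sigma(1)},b_{\tau(1)},\dots,a_{\sigma(m)},b_{\tau(m)})=0$ for \emph{distinct} odd parameters, and this is the real content of the paper, occupying Lemmas~\ref{lem:step2} and~\ref{lem:step3} (a Kawashima-type family of relations, the reversal formula, and the comparison of $\zeta_{\A}$ with $\zeta_{\A}^{\star}$ via the map $d$). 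Neither that machinery nor any substitute for it appears in your proposal, so the argument is not complete.
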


Setting $n=0$ in Theorem~\ref{thm:BB_modp} gives Zhao's results.

\subsection{Statement of the main theorem}
To state our main theorem, we find it convenient to use an algebraic setup,
due to Hoffman~\cite{Hoffman_algebra} in the case of $\zeta$ and $\zeta^{\star}$.
Let $\HH^1=\Q\langle z_1,z_2,\dots\rangle$
denote the noncommutative polynomial algebra in countably many variables.
The product~$\sha$ on $\HH^1$, due to Muneta~\cite{Muneta},
is the $\Q$-bilinear map $\sha\colon\HH^1\times\HH^1\to\HH^1$
defined inductively by
\[
 1\sha w=w\sha1=w,\quad
 z_kw\sha z_{k'}w'=z_k(w\sha z_{k'}w')+z_{k'}(z_kw\sha w')
\]
for $w,w'\in\HH^1$ and $k,k'\in\Z_{\ge1}$.

\begin{ex}
 We have
 \[
  z_k\sha z_l=z_kz_l+z_lz_k,\qquad
  z_k\sha z_lz_{l'}=z_kz_lz_{l'}+z_lz_kz_{l'}+z_lz_{l'}z_k
 \]
 for $k,l,l'\in\Z_{\ge1}$.
\end{ex}

Define $\Q$-linear maps $Z_{\A},\bar{Z}_{\A}\colon\HH^1\to\A$ by setting
\begin{gather*}
 Z_{\A}(1)=\bar{Z}_{\A}(1)=1,\\
 Z_{\A}(z_{k_1}\dotsm z_{k_l})=\zeta_{\A}(k_1,\dots,k_l),\quad
 \bar{Z}_{\A}(z_{k_1}\dotsm z_{k_l})=\zeta_{\A}^{\star}(k_1,\dots,k_l).
\end{gather*}

For $(m,n)\in\Z_{\ge0}^2\setminus\{(0,0)\}$,
let $I_{m,n}$ denote the set of all sequences
\[
 \ve{a}=(a_1,\dots,a_m;b_1,\dots,b_m;c_1,\dots,c_n)
\]
where $a_1,\dots,a_m$ and $b_1,\dots,b_m$ are
odd positive integers
and $c_1,\dots,c_n$ are even positive integers.
For $\ve{a}=(a_1,\dots,a_m;b_1,\dots,b_m;c_1,\dots,c_n)\in I_{m,n}$,
set
\begin{align*}
 z_{\ve{a}}&=
 \sum_{\substack{\sigma,\tau\in\sym_m\\\rho\in\sym_n}}
 z_{a_{\sigma(1)}}z_{b_{\tau(1)}}\dotsm z_{a_{\sigma(m)}}z_{b_{\tau(m)}}
 \sha z_{c_{\rho(1)}}\dotsm z_{c_{\rho(n)}}\\
 &=\sum_{\sigma,\tau\in\sym_m}
 z_{a_{\sigma(1)}}z_{b_{\tau(1)}}\dotsm z_{a_{\sigma(m)}}z_{b_{\tau(m)}}
 \sha z_{c_1}\sha\dots\sha z_{c_n}\in\HH^1,
\end{align*}
where $\sym_l$ is the symmetric group of degree $l$.

\begin{thm}[Main theorem]\label{thm:main}
 For all $(m,n)\in\Z_{\ge0}^2\setminus\{(0,0)\}$ and $\ve{a}\in I_{m,n}$,
 we have $Z_{\A}(z_{\ve{a}})=\bar{Z}_{\A}(z_{\ve{a}})=0$.
\end{thm}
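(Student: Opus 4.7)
My plan is to exploit the stuffle (harmonic) product $\bast$ on $\HH^1$, defined by
\[
 z_k w \bast z_{k'} w' = z_k(w \bast z_{k'} w') + z_{k'}(z_k w \bast w') + z_{k+k'}(w \bast w'),
\]
which (standardly) makes $Z_{\A}$ an algebra homomorphism from $(\HH^1, \bast)$ to $\A$; an analogous product with the sign on $z_{k+k'}$ flipped plays the same role for $\bar Z_{\A}$. The key comparison is that $u \bast v - u \sha v$ collects precisely the ``merge'' terms $z_{k+k'}(\cdots)$ that Muneta's shuffle omits.

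Writing $U = \sum_{\sigma, \tau \in \sym_m} z_{a_{\sigma(1)}} z_{b_{\tau(1)}} \cdots z_{a_{\sigma(m)}} z_{b_{\tau(m)}}$, so that $z_{\ve{a}} = U \sha z_{c_1} \sha \cdots \sha z_{c_n}$, the homomorphism property together with Proposition~\ref{prop:Riemann} gives, whenever $n \ge 1$,
\[
 Z_{\A}(U \bast z_{c_1} \bast \cdots \bast z_{c_n}) = Z_{\A}(U)\,\zeta_{\A}(c_1) \cdots \zeta_{\A}(c_n) = 0.
\]
Fully expanding the iterated stuffle produces $U \bast z_{c_1} \bast \cdots \bast z_{c_n} = z_{\ve{a}} + M_n$, where $M_n$ collects every term in which at least one $z_{c_j}$ has been fused (with another $z_{c_i}$ or with a letter of $U$). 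Hence $Z_{\A}(z_{\ve{a}}) = -Z_{\A}(M_n)$.

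The main step is then a combinatorial analysis of $M_n$. Because each $c_j$ is even while each $a_i,b_i$ is odd, every merge preserves parity: a group of $c_j$'s absorbed into an odd letter stays odd, and a group of $c_j$'s fused only among themselves stays even. After the symmetrisations over $\sym_m \times \sym_m \times \sym_n$, each term in $M_n$ therefore emerges as a positive rational multiple of some $z_{\ve{a}'}$ with $\ve{a}' \in I_{m, n'}$ and $n' < n$. Induction on $n$ then yields $Z_{\A}(z_{\ve{a}}) = 0$, and the parallel scheme using the modified stuffle gives $\bar Z_{\A}(z_{\ve{a}}) = 0$.

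The main obstacle is the base case $n = 0$, $m \ge 1$, which asks for $Z_{\A}(U) = 0$ with arbitrary odd $a_i, b_i$ and strengthens Zhao's theorem (which treats only the constant-index case). I would attack the base by the same philosophy, starting from $Z_{\A}(z_{a_1} \bast z_{b_1} \bast \cdots \bast z_{a_m} \bast z_{b_m}) = 0$ (Proposition~\ref{prop:Riemann}) and combining it with the reversal identity $\zeta_{\A}(k_1, \ldots, k_l) = (-1)^{k_1 + \cdots + k_l} \zeta_{\A}(k_l, \ldots, k_1)$; under our parity hypothesis the total weight $\sum a_i + \sum b_i$ is even, so reversal preserves $Z_{\A}$. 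Pairing each ordering of the $2m$ single letters with its reverse and inducting on $m$ should cut the full-shuffle expansion down to a multiple of $U$ modulo vanishing merge corrections, which is the delicate combinatorial point of the whole argument.
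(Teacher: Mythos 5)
Your reduction of the general case to $n=0$ is essentially the paper's Lemma~\ref{lem:step1}: multiply $U$ harmonically by $z_{c_1}\sha\dots\sha z_{c_n}$ (the paper organises the merge terms via spanning subgraphs of a bipartite graph with all degrees at most $1$), observe that every merge preserves the parity pattern because the $c_j$ are even, and induct on $n$. That part of your plan is sound, up to a notational slip: the recursion you wrote with a plus sign on $z_{k+k'}$ is the product $*$ for which $Z_{\A}$ is a homomorphism, while $\bast$ (minus sign) is the one adapted to $\bar{Z}_{\A}$; and the merge terms need not be \emph{positive} multiples, though that is harmless.

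The genuine gap is the base case $n=0$, $m\ge1$, which is the actual content of the theorem and which you explicitly defer as ``the delicate combinatorial point.'' Stuffle plus reversal does settle $m=1$ (there $z_{a}*z_{b}$ gives $\zeta_{\A}(a,b)+\zeta_{\A}(b,a)=0$ and reversal with even weight gives $\zeta_{\A}(b,a)=\zeta_{\A}(a,b)$, hence $2\zeta_{\A}(a,b)=0$), but for $m\ge2$ your proposed pairing of each ordering with its reverse only relates the full symmetrization $z_{a_1}\sha z_{b_1}\sha\dots\sha z_{a_m}\sha z_{b_m}$ (all $(2m)!$ interleavings, already known to vanish by Proposition~\ref{prop:permutation}) to itself, and relates the $a$-first alternating sum $U$ to the $b$-first one; it gives no mechanism for isolating the $m!^2$ alternating words from the remaining orderings. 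The paper needs two further ingredients here, both of which play $\zeta_{\A}$ against $\zeta_{\A}^{\star}$ via the map $d$ with $\bar{Z}_{\A}=Z_{\A}\circ d$: Kawashima's identity $\sum_{j=0}^{l}(-1)^jd(z_{k_j}\dotsm z_{k_1})*z_{k_{j+1}}\dotsm z_{k_l}=0$, which combined with reversal and the inductive hypothesis on smaller $m$ yields $Z_{\A}(z_{\ve{a}})+\bar{Z}_{\A}(z_{\ve{a}})=0$ (Lemma~\ref{lem:step2}), and a partition identity expressing $\sum_{\sigma,\tau}d(z_{a_{\sigma(1)}}z_{b_{\tau(1)}}\dotsm)$ in terms of the $z_{\ve{b}}$ with smaller $m'$, which yields $Z_{\A}(z_{\ve{a}})=\bar{Z}_{\A}(z_{\ve{a}})$ (Lemma~\ref{lem:step3}); adding the two gives $2Z_{\A}(z_{\ve{a}})=0$. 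Your sketch never brings $\zeta_{\A}^{\star}$ into the base case at all, and without some substitute for Kawashima's relation the argument does not close.
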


\begin{proof}[Proof that Theorem~\ref{thm:main} implies Theorem~\ref{thm:BB_modp}]
 Put $\ve{a}=(a,\dots,a;b,\dots,b;c,\dots,c)\in I_{m,n}$.
 Then since
 \[
  z_{\ve{a}}=\sum_{\substack{\sigma,\tau\in\sym_m\\\rho\in\sym_n}}
  z_az_b\dotsm z_az_b\sha z_c\dotsm z_c
  =m!^2n!(z_az_b)^m\sha z_c^n,
 \]
 Theorem~\ref{thm:main} shows that
 $Z_{\A}\bigl((z_az_b)^m\sha z_c^n\bigr)=\bar{Z}_{\A}\bigl((z_az_b)^m\sha z_c^n\bigr)=0$,
 which is equivalent to Theorem~\ref{thm:BB_modp}.
\end{proof}

\section{Proof of the main theorem}
\subsection{Outline of the proof}
For $(m,n)\in\Z_{\ge0}^2\setminus\{(0,0)\}$,
write $P_{m,n}$ for the statement that
$Z_{\A}(z_{\ve{a}})=\bar{Z}_{\A}(z_{\ve{a}})=0$
for all $\ve{a}\in I_{m,n}$.
Then the main theorem says that
$P_{m,n}$ is true for all $(m,n)\in\Z_{\ge0}^2\setminus\{(0,0)\}$.
Our proof consists of the following four lemmas:

\begin{lem}\label{lem:step0}
 The statement $P_{0,n}$ is true for all positive integers $n$.
\end{lem}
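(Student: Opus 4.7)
The plan is induction on $n$. For $\ve{a}=(;;c_1,\dots,c_n)\in I_{0,n}$, the factor indexed by $\sym_m\times\sym_m$ is trivial (since $m=0$), so $z_{\ve{a}}=z_{c_1}\sha z_{c_2}\sha\cdots\sha z_{c_n}$. Because Muneta's product $\sha$ is literally the classical shuffle on the alphabet $\{z_1,z_2,\dots\}$ (as the defining example confirms), this equals
\[
 z_{c_1}\sha\cdots\sha z_{c_n}=\sum_{\pi\in\sym_n}z_{c_{\pi(1)}}\cdots z_{c_{\pi(n)}},
\]
and the lemma reduces to proving
\[
 \sum_{\pi\in\sym_n}\zeta_{\A}(c_{\pi(1)},\dots,c_{\pi(n)})=\sum_{\pi\in\sym_n}\zeta_{\A}^{\star}(c_{\pi(1)},\dots,c_{\pi(n)})=0.
\]
The base case $n=1$ is exactly Proposition~\ref{prop:Riemann}.

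For the inductive step I would exploit the harmonic (stuffle) product, under which $Z_{\A}$ is a $\Q$-algebra homomorphism (the termwise stuffle identity for the finite truncations holds modulo every prime). The iterated stuffle $z_{c_1}\ast z_{c_2}\ast\cdots\ast z_{c_n}$ of single letters expands as a sum over ordered set partitions $(B_1,\dots,B_k)$ of $\{1,\dots,n\}$, with each partition contributing $z_{s_1}\cdots z_{s_k}$ where $s_j=\sum_{i\in B_j}c_i$. Applying $Z_{\A}$ and using Proposition~\ref{prop:Riemann} on the product side yields
\[
 0=\zeta_{\A}(c_1)\cdots\zeta_{\A}(c_n)=\sum_{(B_1,\dots,B_k)}\zeta_{\A}(s_1,\dots,s_k).
\]
Grouping ordered partitions by their underlying unordered partition, the $k!$ reorderings of a fixed unordered $k$-block partition collectively contribute $Z_{\A}(z_{s_1}\sha\cdots\sha z_{s_k})$. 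For a partition containing some block of size $\geq 2$ one has $k<n$, and the $s_j$'s are sums of even positive integers, hence themselves even and positive; the inductive hypothesis then makes this contribution vanish. The only surviving term is the all-singletons partition, giving precisely $Z_{\A}(z_{c_1}\sha\cdots\sha z_{c_n})$, which must therefore be $0$.

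The star statement follows from the same template using the ``star-harmonic'' product, for which $\bar Z_{\A}$ is multiplicative (e.g.\ $\zeta_{\A}^{\star}(a)\zeta_{\A}^{\star}(b)=\zeta_{\A}^{\star}(a,b)+\zeta_{\A}^{\star}(b,a)-\zeta_{\A}^{\star}(a+b)$, and more generally the iterated expansion weights ordered set partitions into $k$ blocks by $(-1)^{n-k}$). The extra signs do not disturb the inductive collapse of the $k<n$ contributions, and the remaining $k=n$ contribution is again the full shuffle, which must therefore vanish. The only real obstacle is the bookkeeping of the stuffle expansion — identifying the $k=n$ piece as the complete shuffle $\sum_{\pi\in\sym_n}$ and checking that the compressed indices $s_j$ keep one inside the hypothesis of the induction — and both are straightforward once explicitly set up.
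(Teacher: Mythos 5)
Your proof is correct and is essentially the paper's own argument: the paper reduces Lemma~\ref{lem:step0} to Proposition~\ref{prop:permutation}, whose proof is exactly your expansion of $z_{c_1}*\dots*z_{c_n}$ over set partitions followed by induction on the number of blocks, with the $\bast$-product handling the star case. The only cosmetic difference is that the paper runs the induction for arbitrary positive integers $k_1,\dots,k_n$ (so no parity bookkeeping is needed), whereas you keep the indices even and verify that the block sums stay even to fit the hypothesis $P_{0,n'}$; both closures of the induction are valid.
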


\begin{lem}\label{lem:step1}
 Suppose that $m$ is a positive integer such that $P_{m,0}$ is true.
 Then $P_{m,n}$ is true for all nonnegative integers $n$.
\end{lem}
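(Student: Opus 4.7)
The plan is to induct on $n$, with the base case $n=0$ being exactly the hypothesis $P_{m,0}$. Given $\ve{a}=(a_1,\dots,a_m;b_1,\dots,b_m;c_1,\dots,c_n)\in I_{m,n}$ in the inductive step, I factor $z_{\ve{a}}=z_{\ve{a}'}\sha z_{c_n}$, where $\ve{a}'=(a_1,\dots,a_m;b_1,\dots,b_m;c_1,\dots,c_{n-1})\in I_{m,n-1}$, and try to reduce $Z_{\A}(z_{\ve{a}'}\sha z_{c_n})$ and $\bar{Z}_{\A}(z_{\ve{a}'}\sha z_{c_n})$ to instances of the inductive hypothesis $P_{m,n-1}$.

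The reduction is driven by the harmonic (stuffle) product. Because $\zeta_{\A}(c)=0$ for every $c\ge 1$ by Proposition~\ref{prop:Riemann}, the product rule $Z_{\A}(u)Z_{\A}(v)=Z_{\A}(u*v)$ yields $Z_{\A}(u*z_c)=0$ for every $u\in\HH^1$. Writing the stuffle expansion
\[
 u*z_c=u\sha z_c+\sum_i \partial_i^{c}u,
\]
where $\partial_i^{c}$ denotes the operator that adds $c$ to the exponent at position $i$, this becomes $Z_{\A}(u\sha z_c)=-\sum_i Z_{\A}(\partial_i^{c}u)$. An entirely analogous computation with the bar-stuffle product $\bast$ gives $\bar{Z}_{\A}(u\sha z_c)=\sum_i \bar{Z}_{\A}(\partial_i^{c}u)$, differing only by a sign.

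Applied to $u=z_{\ve{a}'}$ and $c=c_n$, the sum $\sum_i \partial_i^{c_n}z_{\ve{a}'}$ splits according to whether position $i$ houses an $a$-, $b$-, or $c$-letter. The symmetry of $z_{\ve{a}'}$ in $a_1,\dots,a_m$ (via $\sum_\sigma$), in $b_1,\dots,b_m$ (via $\sum_\tau$), and in $c_1,\dots,c_{n-1}$ (via commutativity of $\sha$) collapses these three partial sums to
\[
 \sum_{j=1}^m z_{\ve{a}^{(j)}}+\sum_{j=1}^m z_{\ve{b}^{(j)}}+\sum_{k=1}^{n-1} z_{\ve{c}^{(k)}},
\]
where $\ve{a}^{(j)}$, $\ve{b}^{(j)}$, $\ve{c}^{(k)}$ are the tuples obtained from $\ve{a}'$ by adding $c_n$ to the single entry $a_j$, $b_j$, or $c_k$ respectively. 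Since $c_n$ is even, $a_j+c_n$ and $b_j+c_n$ remain odd while $c_k+c_n$ remains even, so every modified tuple is still in $I_{m,n-1}$. The inductive hypothesis then annihilates each summand and forces $Z_{\A}(z_{\ve{a}})=\bar{Z}_{\A}(z_{\ve{a}})=0$.

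The principal obstacle is the combinatorial bookkeeping in the previous paragraph: one must verify precisely that summing $\partial_i^{c_n}$ over the $a$-positions in every monomial of $z_{\ve{a}'}$ really reassembles $\sum_j z_{\ve{a}^{(j)}}$, and analogously for the $b$- and $c$-positions. This follows from the observation that each monomial in $z_{\ve{a}'}$ contains each $z_{a_j}$, each $z_{b_j}$, and each $z_{c_k}$ exactly once, but must be unpacked with care. Beyond this, the argument is purely the lever supplied by Proposition~\ref{prop:Riemann}, routed through the stuffle--shuffle decomposition to convert a shuffle with $z_{c_n}$ into a sum of shorter symmetric expressions already controlled by the induction.
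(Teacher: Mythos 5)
Your proof is correct, and it follows the same overall strategy as the paper---induction on $n$, with the harmonic product used to trade the shuffle with the $c$'s for terms covered by the inductive hypothesis, the parity of the entries being preserved because $c_n$ is even---but the decomposition you use is genuinely different. The paper computes $z_{(a_1,\dots,a_m;b_1,\dots,b_m;\emptyset)}*(z_{c_1}\sha\dotsm\sha z_{c_n})$ all at once, indexing the resulting terms by partial matchings $G$ between the vertices $a_1,b_1,\dots,a_m,b_m$ and $c_1,\dots,c_n$; the left-hand side vanishes there because $P_{m,0}$ kills the first factor, and the terms with at least one edge require the full strong hypothesis $P_{m,l}$ for every $l<n$, since a matching can absorb several $c$'s simultaneously. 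You instead peel off a single letter, computing $z_{\ve{a}'}*z_{c_n}=z_{\ve{a}'}\sha z_{c_n}+\sum_i\partial_i^{c_n}z_{\ve{a}'}$ (and its $\bast$-analogue with the opposite sign), so that the left-hand side dies from the single-value vanishing $\zeta_{\A}(c_n)=\zeta_{\A}^{\star}(c_n)=0$ of Proposition~\ref{prop:Riemann}, only single contractions appear, and only $P_{m,n-1}$ is invoked. What your route buys is lighter combinatorics (a derivation-like operator rather than a sum over matchings) and a weaker inductive hypothesis; what the paper's route buys is a direct generalization of the displayed $m=1$ identity that treats all the $c$'s symmetrically in one identity. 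The bookkeeping point you flag---that $\sum_i\partial_i^{c_n}z_{\ve{a}'}$ regroups as $\sum_j z_{\ve{a}^{(j)}}+\sum_j z_{\ve{b}^{(j)}}+\sum_k z_{\ve{c}^{(k)}}$---does hold, because every monomial of $z_{\ve{a}'}$ contains each of the $2m+n-1$ letters exactly once when they are labelled as distinct symbols (the same device the paper uses when it regards the $2m+n$ vertices as distinct even if equal as integers), and replacing one labelled letter by its augmented version throughout $z_{\ve{a}'}$ yields exactly the $z$ of the modified tuple; spelling this out would complete your argument.
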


\begin{lem}\label{lem:step2}
 Suppose that $m$ is a positive integer such that
 $P_{m',n}$ is true whenever $m'$ is a positive integer less than $m$
 and $n$ is a nonnegative integer.
 Then $Z_{\A}(z_{\ve{a}})+\bar{Z}_{\A}(z_{\ve{a}})=0$ for all $\ve{a}\in I_{m,0}$.
\end{lem}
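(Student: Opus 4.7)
I will derive the lemma from the identity
\[
 \sum_{i=0}^{n}(-1)^{i+k_{1}+\cdots+k_{i}}\zeta_{\A}^{\star}(k_{1},\dots,k_{i})\,\zeta_{\A}(k_{i+1},\dots,k_{n})=0
\]
(with the convention $\zeta_{\A}(\emptyset)=\zeta_{\A}^{\star}(\emptyset)=1$), valid for every $n\ge 1$ and all positive integers $k_{1},\dots,k_{n}$. This is the antipode identity $\mu(S\otimes\mathrm{id})\Delta=\eta\epsilon$ of the quasi-shuffle Hopf algebra on $\HH^{1}$ applied to $z_{k_{1}}\cdots z_{k_{n}}$, combined with $S(z_{k_{1}}\cdots z_{k_{i}})=(-1)^{i}T(z_{k_{i}}\cdots z_{k_{1}})$ (where $T\colon\HH^{1}\to\HH^{1}$ is the linear map satisfying $\bar{Z}_{\A}=Z_{\A}\circ T$), the fact that $Z_{\A}$ is a $*$-homomorphism, and the reversal relation $\zeta_{\A}^{\star}(k_{i},\dots,k_{1})=(-1)^{k_{1}+\cdots+k_{i}}\zeta_{\A}^{\star}(k_{1},\dots,k_{i})$, itself a consequence of $1/m\equiv-1/(p-m)\pmod{p}$. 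For $\ve{a}\in I_{m,0}$ every $k_{j}$ is odd, so $k_{1}+\cdots+k_{i}\equiv i\pmod{2}$ and every sign in the identity becomes $+1$.

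Applying the identity to $(k_{1},\dots,k_{2m})=(a_{\sigma(1)},b_{\tau(1)},\dots,a_{\sigma(m)},b_{\tau(m)})$ for each $(\sigma,\tau)\in\sym_{m}^{2}$ and summing, the boundary terms $i=0$ and $i=2m$ contribute exactly $Z_{\A}(z_{\ve{a}})+\bar{Z}_{\A}(z_{\ve{a}})$, so the lemma reduces to showing that each cross term
\[
 C_{i}:=\sum_{\sigma,\tau\in\sym_{m}}\zeta_{\A}^{\star}(k_{1},\dots,k_{i})\,\zeta_{\A}(k_{i+1},\dots,k_{2m})
\]
vanishes for $1\le i\le 2m-1$. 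When $i=1$ or $i=2m-1$ one of the factors is $\zeta_{\A}$ of a single index and hence zero by Proposition~\ref{prop:Riemann}. For even $i=2j$ with $1\le j\le m-1$ both subwords realize an $I_{\cdot,0}$-type alternating pattern, and splitting the sum by which $a_{s}$'s and $b_{t}$'s land in the first half gives
\[
 C_{2j}=\sum_{\substack{S,T\subseteq\{1,\dots,m\}\\|S|=|T|=j}}\bar{Z}_{\A}(z_{\ve{a}_{S,T}})\,Z_{\A}(z_{\ve{a}_{\bar{S},\bar{T}}}),
\]
with $\ve{a}_{S,T}\in I_{j,0}$ and $\ve{a}_{\bar{S},\bar{T}}\in I_{m-j,0}$; the inductive hypothesis $P_{j,0}$ kills every summand.

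The main obstacle is the odd interior case $i=2j-1$ with $2\le j\le m-1$: the first subword $(a_{s_{1}},b_{t_{1}},\dots,b_{t_{j-1}},a_{s_{j}})$ is an unbalanced alternating word that does not itself sit inside any $I_{m',n'}$. I plan to handle it via an involution argument: for each choice of the underlying index sets $S$ (of size $j$) and $T$ (of size $j-1$), the inner sum $\sum_{\text{orderings}}\zeta_{\A}^{\star}(a_{s_{1}},b_{t_{1}},\dots,a_{s_{j}})$ is invariant under the involution that simultaneously reverses the tuples $(s_{1},\dots,s_{j})$ and $(t_{1},\dots,t_{j-1})$, which replaces the argument word by its reversal. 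Since the reversal has odd total weight (a sum of $2j-1$ odd integers), the reversal relation introduces a factor $-1$; because the $s_{i}$'s are distinct and $j\ge 2$, the involution has no fixed points, so orderings pair up into terms of opposite sign and the inner sum — hence $C_{2j-1}$ — vanishes. This reversal-pairing is the step that genuinely uses the hypothesis that the $a$'s and $b$'s are odd; once it is in place every cross term collapses and the master identity yields $Z_{\A}(z_{\ve{a}})+\bar{Z}_{\A}(z_{\ve{a}})=0$.
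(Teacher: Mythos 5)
Your proof is correct and follows essentially the same route as the paper: your master identity is exactly the paper's Lemma~\ref{lem:semi-reversal} (there cited from Kawashima/IKOO rather than derived from the quasi-shuffle antipode) combined with Proposition~\ref{prop:reversal}, and your treatment of the cross terms --- killing the even cuts by the inductive hypothesis $P_{j,0}$ and the odd cuts by the reversal-plus-parity pairing --- matches the paper's vanishing of the $P_i$ and $Q_i$. The only cosmetic differences are that you pre-apply the reversal to normalize all signs to $+1$ and dispose of $i=1,2m-1$ via Proposition~\ref{prop:Riemann}; also note the fixed-point-freeness of your involution is not needed, since $S=-S$ already forces $S=0$ over $\Q$.
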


\begin{lem}\label{lem:step3}
 Suppose that $m$ is a positive integer such that
 $P_{m',n}$ is true whenever $m'$ is a positive integer less than $m$
 and $n$ is a nonnegative integer.
 Then $Z_{\A}(z_{\ve{a}})=\bar{Z}_{\A}(z_{\ve{a}})$ for all $\ve{a}\in I_{m,0}$.
\end{lem}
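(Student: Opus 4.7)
The plan is to use the identity
\[
  \bar Z_{\A}(w) - Z_{\A}(w) = Z_{\A}(T(w)) \qquad (w \in \HH^1),
\]
where $T \colon \HH^1 \to \HH^1$ is the $\Q$-linear map sending $z_{k_1} \cdots z_{k_l}$ to the sum of $z_{K_1} \cdots z_{K_r}$ over all partitions of $(k_1,\dots,k_l)$ into $r < l$ consecutive blocks, with the entries of each block being added. This identity reflects the decomposition of the weak inequality $m_i \ge m_{i+1}$ in the defining sum of $\zeta_{\A}^{\star}$ into the strict cases $m_i > m_{i+1}$ and $m_i = m_{i+1}$. It therefore suffices to prove that $Z_{\A}(T(z_{\ve a})) = 0$.

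The central claim is that $T(z_{\ve a})$ lies in the $\Q$-span of $\{ z_{\ve b} : \ve b \in I_{m', n'}, \ 0 \le m' < m, \ n' \ge 0 \}$. Once this is known, the induction hypothesis $P_{m', n'}$ (for $m' \ge 1$) together with Lemma~\ref{lem:step0} (which handles the case $m' = 0$, where $n' \ge 1$ automatically because $\ve b$ is nonempty) yields $Z_{\A}(z_{\ve b}) = 0$ for every contributing term, so $Z_{\A}(T(z_{\ve a})) = 0$ and hence $\bar Z_{\A}(z_{\ve a}) = Z_{\A}(z_{\ve a})$.

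To establish the central claim, I would fix a nontrivial composition $\lambda = (l_1, \dots, l_r)$ of $2m$ and analyse its contribution. In every monomial appearing in $z_{\ve a}$ the positions $1,\dots,2m$ alternate as $a,b,a,b,\dots,a,b$, so each block of $\lambda$ is either \emph{a-type odd} (odd length starting at an odd position, hence carrying one more $a$-slot than $b$-slot), \emph{b-type odd} (odd length starting at an even position), or \emph{even-length} (equally many $a$- and $b$-slots). A parity count — odd blocks flip the starting parity of the next block while even-length blocks preserve it — shows that the odd blocks alternate in type along $\lambda$, starting with a-type, and that a-type and b-type odd blocks occur in equal number; call this number $k$. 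Since $k = m$ forces every $l_j = 1$, any nontrivial $\lambda$ gives $k < m$.

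The main obstacle is then the bookkeeping that matches the $\lambda$-contribution to a combination of $z_{\ve b}$'s. Summing over $(\sigma,\tau) \in \sym_m \times \sym_m$ distributes the $a_i$'s and $b_j$'s into the slots of $\lambda$, and the merged word has a-type odd blocks supplying the entries $a_i'$ of some $\ve b \in I_{k, n'}$, b-type odd blocks the $b_j'$, and even-length blocks the $c_i'$. Because the odd blocks already alternate in type while the even-length blocks occupy positions that match precisely the free shuffle positions built into the definition of $z_{\ve b}$, the sum reassembles block by block into a nonnegative-integer combination of $z_{\ve b}$ with $\ve b \in I_{k, n'}$. It is instructive to verify this first for a single 2-block merger, where one recovers the explicit identity $\sum_{\alpha,\beta} z_{\ve a'(\alpha,\beta)}$ with $\ve a'(\alpha,\beta) = ((a_i)_{i\ne\alpha}; (b_j)_{j\ne\beta}; a_\alpha + b_\beta) \in I_{m-1,1}$, the two insertion parities of the merged letter being supplied by mergers of $a$-$b$ and $b$-$a$ pairs respectively; the general case follows by iterating this identification over all blocks.
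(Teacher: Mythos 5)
Your proposal is correct and follows essentially the same route as the paper: the paper likewise uses $\bar Z_{\A}=Z_{\A}\circ d$ and shows (Lemma~\ref{lem:partition}) that $\sum_{\sigma,\tau}d(z_{a_{\sigma(1)}}z_{b_{\tau(1)}}\dotsm z_{a_{\sigma(m)}}z_{b_{\tau(m)}})$ equals $z_{\ve a}$ plus nonnegative integer multiples of $z_{\ve b}$ with $\ve b\in I_{m',n'}$, $m'<m$, which vanish by the hypothesis (together with $P_{0,n}$ when all blocks are of even length). The only difference is organizational: the paper indexes the merged terms by partitions $\Pi$ of the multiset $\{a_1,\dots,a_m,b_1,\dots,b_m\}$ satisfying $\lvert\mu_a(P)-\mu_b(P)\rvert\le1$ and computes the multiplicity $\prod_{P}\mu_a(P)!\,\mu_b(P)!$ directly, whereas you index by compositions of the word positions; your parity count showing that the odd-length blocks alternate in type is precisely the paper's observation that the $A$-, $B$-, and $C$-blocks interleave in the pattern required by the definition of $z_{\ve b}$.
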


It is easy to see that the lemmas imply the main theorem.
Indeed, $P_{1,0}$ follows from Lemmas \ref{lem:step2} and \ref{lem:step3}
because $m=1$ vacuously satisfies the assumption;
Lemma~\ref{lem:step1} then shows that
$P_{1,n}$ is true for all nonnegative integers $n$;
it follows that $m=2$ satisfies
the assumption of Lemmas \ref{lem:step2} and \ref{lem:step3},
and so $P_{2,0}$ is true;
induction proceeds in this manner.

\subsection{Proof of Lemma~\ref{lem:step0}}
Although Lemma~\ref{lem:step0} is a direct consequence
of \cite[Theorem~4.4]{Hoffman},
we give a rather detailed proof of the lemma for the convenience of the reader,
partly because our notation differs from that of \cite{Hoffman} and
partly because some of the concepts introduced will also be necessary afterwards.

\begin{defn}
 The \emph{harmonic products} $*$ and $\bast$ on $\HH^1$
 are the $\Q$-bilinear maps $*,\bast\colon\HH^1\times\HH^1\to\HH^1$
 defined inductively by
 \begin{align*}
  1*w&=w*1=w,&
  z_kw*z_{k'}w'&=z_k(w*z_{k'}w')+z_{k'}(z_kw*w')+z_{k+k'}(w*w'),\\
  1\bast w&=w\bast 1=w,&
  z_kw\bast z_{k'}w'
  &=z_k(w\bast z_{k'}w')+z_{k'}(z_kw\bast w')-z_{k+k'}(w\bast w')
 \end{align*}
 for $w,w'\in\HH^1$ and $k,k'\in\Z_{\ge1}$.
\end{defn}

\begin{ex}
 We have
 \[
  z_k*z_l=z_kz_l+z_lz_k+z_{k+l},\qquad
  z_k\bast z_l=z_kz_l+z_lz_k-z_{k+l}
 \]
 for $k,l\in\Z_{\ge1}$.
\end{ex}

We remark that $\HH^1$ is a commutative $\Q$-algebra
with either $*$ or $\bast$ as its product.

As illustrated by
\begin{align*}
 Z_{\A}(z_k)Z_{\A}(z_l)&=\zeta_{\A}(k)\zeta_{\A}(l)
 =\Biggl(\sum_{p>m\ge1}\frac{1}{m^k}\Biggr)
  \Biggl(\sum_{p>n\ge1}\frac{1}{n^l}\Biggr)\\
 &=\Biggl(\sum_{p>m>n\ge1}+\sum_{p>n>m\ge1}+\sum_{p>m=n\ge1}\Biggr)
   \frac{1}{m^kn^l}\\
 &=\zeta_{\A}(k,l)+\zeta_{\A}(l,k)+\zeta_{\A}(k+l)
  =Z_{\A}(z_kz_l+z_lz_k+z_{k+l})\\
 &=Z_{\A}(z_k*z_l),
\end{align*}
the harmonic products have been defined so that
$Z_{\A}$ and $\bar{Z}_{\A}$ are respectively a $*$- and $\bast$-homomorphism:
\begin{prop}\label{prop:Z_*-hom}
 The maps $Z_{\A},\bar{Z}_{\A}\colon\HH^1\to\A$ are respectively
 a $*$- and $\bast$-homomorphism, i.e.\
 $Z_{\A}(w*w')=Z_{\A}(w)Z_{\A}(w')$ and
 $\bar{Z}_{\A}(w\bast w')=\bar{Z}_{\A}(w)\bar{Z}_{\A}(w')$
 for all $w,w'\in\HH^1$.
\end{prop}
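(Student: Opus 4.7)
The plan is to prove both identities by induction on $\deg(w) + \deg(w')$, the total number of $z$-letters in $w$ and $w'$. By $\Q$-bilinearity of the products $*$, $\bast$ and of $Z_{\A}, \bar{Z}_{\A}$, it suffices to treat monomials. The base case $w = 1$ or $w' = 1$ is immediate from the defining relations $1 * v = v * 1 = v$ (and the analogue for $\bast$) together with $Z_{\A}(1) = \bar{Z}_{\A}(1) = 1$.

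For the inductive step in the $*$ case, write $w = z_k w_0$ and $w' = z_{k'} w_0'$, and expand $Z_{\A}(w) Z_{\A}(w')$ as an iterated double sum over strictly decreasing tuples of indices bounded by $p$. Splitting the sum by the three mutually exclusive possibilities $m_1 > n_1$, $m_1 < n_1$, and $m_1 = n_1$ for the largest indices of the two tuples---exactly the trick illustrated by the example computation $Z_{\A}(z_k) Z_{\A}(z_l) = Z_{\A}(z_k * z_l)$ just before the proposition statement---produces three partial sums. Applying the induction hypothesis to an inner product of truncated sums in each case, these three partial sums reassemble into $Z_{\A}(z_k(w_0 * z_{k'} w_0'))$, $Z_{\A}(z_{k'}(z_k w_0 * w_0'))$, and $Z_{\A}(z_{k+k'}(w_0 * w_0'))$ respectively; their sum equals $Z_{\A}(w * w')$ by the defining recursion of $*$.

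For $\bar{Z}_{\A}$ and $\bast$, the expansion proceeds with weakly decreasing tuples. The same three-way split applies, but now the expression $\bar{Z}_{\A}(z_k(w_0 \bast z_{k'} w_0'))$ corresponds to the weak condition $m_1 \ge n_1$ and therefore absorbs the diagonal case $m_1 = n_1$; likewise for $\bar{Z}_{\A}(z_{k'}(z_k w_0 \bast w_0'))$. Summing these two pieces double-counts the diagonal, whose contribution is precisely $\bar{Z}_{\A}(z_{k+k'}(w_0 \bast w_0'))$ by one more application of the induction hypothesis; subtracting it once corrects the overcount, matching the $-z_{k+k'}(w_0 \bast w_0')$ term in the defining recursion of $\bast$ and thereby explaining the sign difference with $*$.

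The main obstacle is bookkeeping for the induction: each inner sum appearing in the three cases above is a sum truncated at the current largest index rather than at $p$, so the induction hypothesis as stated does not directly apply. This is handled cleanly by strengthening the inductive assertion to the statement that the analogous identities hold, as rational-number identities, for sums truncated at an arbitrary upper bound $N$ in place of $p$. The proposition then follows by specializing $N = p$ for every prime $p$ and reducing modulo $p$ in $\A$.
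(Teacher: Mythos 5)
Your proof is correct, and it follows essentially the same route the paper intends: the paper offers no formal proof of this proposition, only the illustrative computation $Z_{\A}(z_k)Z_{\A}(z_l)=Z_{\A}(z_k*z_l)$, and your argument is the standard fleshing-out of that computation by induction on the total degree with the three-way split of the largest summation indices. Your strengthening of the inductive statement to truncated sums with an arbitrary upper bound $N$ is exactly the right device to make the induction close, and your explanation of the sign in the $\bast$ recursion via the double-counted diagonal is accurate.
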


Recall that a \emph{partition} of a set $X$ is a family of pairwise disjoint
nonempty subsets of $X$ with union $X$.

\begin{prop}[{\cite[Theorem~4.4]{Hoffman}}]\label{prop:permutation}
 Let $k_1,\dots,k_n$ be positive integers.
 Then
 \[
  Z_{\A}(z_{k_1}\sha\dotsb\sha z_{k_n})
  =\bar{Z}_{\A}(z_{k_1}\sha\dotsb\sha z_{k_n})=0.
 \]
\end{prop}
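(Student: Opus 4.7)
My plan is to prove both equalities simultaneously by induction on $n$, using the harmonic products $*$ and $\bast$ as a bridge. The crucial observation is that $Z_{\A}$ is a $*$-homomorphism and $\bar{Z}_{\A}$ is a $\bast$-homomorphism by Proposition~\ref{prop:Z_*-hom}, while $\zeta_{\A}(k)=\zeta_{\A}^{\star}(k)=0$ for every positive integer $k$ by Proposition~\ref{prop:Riemann}. Consequently, the iterated products $z_{k_1}*\dotsb*z_{k_n}$ and $z_{k_1}\bast\dotsb\bast z_{k_n}$ lie in the kernels of $Z_{\A}$ and $\bar{Z}_{\A}$ respectively. The strategy is therefore to expand these iterated harmonic products in terms of the $\sha$-product and strip off the lower-order pieces by induction.

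The preliminary step will be to establish the partition expansion
\begin{align*}
 z_{k_1}*\dotsb*z_{k_n}&=\sum_{\pi}(z_{K_{B_1}}\sha\dotsb\sha z_{K_{B_{|\pi|}}}),\\
 z_{k_1}\bast\dotsb\bast z_{k_n}&=\sum_{\pi}(-1)^{n-|\pi|}(z_{K_{B_1}}\sha\dotsb\sha z_{K_{B_{|\pi|}}}),
\end{align*}
where $\pi=\{B_1,\dots,B_{|\pi|}\}$ ranges over all set partitions of $\{1,\dots,n\}$ and $K_B=\sum_{i\in B}k_i$. Both identities are proved by induction on $n$ directly from the recursive definitions: the terms $z_k(w*z_{k'}w')+z_{k'}(z_kw*w')$ match the $\sha$-recursion and correspond to placing the two head indices in distinct blocks, while the term $\pm z_{k+k'}(w*w')$ corresponds to merging the two heads into one block, the sign $-1$ for $\bast$ accounting for the alternating factor $(-1)^{n-|\pi|}$.

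With this expansion in hand, the main induction is immediate. The base case $n=1$ is Proposition~\ref{prop:Riemann}. Assume the proposition for all lengths smaller than $n$; applying $Z_{\A}$ to the first expansion gives
\[
 0=\prod_{i=1}^{n}\zeta_{\A}(k_i)=Z_{\A}(z_{k_1}*\dotsb*z_{k_n})=\sum_{\pi}Z_{\A}(z_{K_{B_1}}\sha\dotsb\sha z_{K_{B_{|\pi|}}}).
\]
Every $\pi$ with $|\pi|<n$ contributes $0$ by the inductive hypothesis, so the partition of $\{1,\dots,n\}$ into singletons alone survives, yielding $Z_{\A}(z_{k_1}\sha\dotsb\sha z_{k_n})=0$. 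The same argument applied to $\bast$ and $\bar{Z}_{\A}$ handles the starred case; the signs $(-1)^{n-|\pi|}$ are inconsequential because all non-trivial-partition terms already vanish by induction.

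The main obstacle is the partition expansion itself: it is a purely formal identity, but requires some careful bookkeeping since the recursion for $*$ and $\bast$ simultaneously creates new blocks and interleaves the remaining heads in a shuffle-like fashion.
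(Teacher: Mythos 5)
Your proposal is correct and follows essentially the same route as the paper: the key identity $z_{k_1}*\dotsb*z_{k_n}=\sum_{\Pi}\bigsha_{A\in\Pi}z_{\sum_{i\in A}k_i}$ over set partitions, combined with Propositions~\ref{prop:Riemann} and \ref{prop:Z_*-hom} and induction on $n$, is exactly the paper's argument (the paper merely states the partition expansion as an observation and treats the $\bast$ case by the analogous identity, whose signs, as you note, are harmless). No gaps.
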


\begin{proof}
 Observe that
 \[
  z_{k_1}*\dots*z_{k_n}
  =\sum_{\text{$\Pi$ is a partition of $\{1,\dots,n\}$}}
   \bigsha_{A\in\Pi}z_{\sum_{i\in A}k_i};
 \]
 apply $Z_{\A}$ and use Propositions~\ref{prop:Riemann} and \ref{prop:Z_*-hom}
 to obtain
 \[
  \sum_{\text{$\Pi$ is a partition of $\{1,\dots,n\}$}}
  Z_{\A}\biggl(\bigsha_{A\in\Pi}z_{\sum_{i\in A}k_i}\biggr)=0.
 \]
 This shows by induction on $n$ that $Z_{\A}(z_{k_1}\sha\dotsb\sha z_{k_n})=0$
 whenever $k_1,\dots,k_n$ are positive integers.
 The other equation $\bar{Z}_{\A}(z_{k_1}\sha\dotsb\sha z_{k_n})=0$
 can be proved in a similar fashion by using $\bast$ instead of $*$.
\end{proof}

\begin{proof}[Proof of Lemma~\ref{lem:step0}]
 Immediate from Proposition~\ref{prop:permutation}.
\end{proof}

\subsection{Proof of Lemma~\ref{lem:step1}}
Before presenting a proof for general $m$,
we look at the simple case of $m=1$.
We prove by induction on $n$ that
$Z_{\A}(z_az_b\sha z_{c_1}\sha\dotsm\sha z_{c_n})=0$
for all $(a;b;c_1,\dots,c_n)\in I_{1,n}$,
assuming the base case $n=0$.
Let $n\ge1$ and suppose that the claim is true if $n$ is smaller.
Let $(a;b;c_1,\dots,c_n)\in I_{1,n}$.
Apply $Z_{\A}$ to the identity
\begin{align*}
 z_az_b*(z_{c_1}\sha\dotsm\sha z_{c_n})
 &=z_az_b\sha z_{c_1}\sha\dotsm\sha z_{c_n}\\
 &\quad+\sum_{j=1}^{n}\Biggl(z_{a+c_j}z_b\sha\bigsha_{k\ne j}z_{c_k}\Biggr)
   +\sum_{j=1}^{n}\Biggl(z_az_{b+c_j}\sha\bigsha_{k\ne j}z_{c_k}\Biggr)\\
 &\quad+\sum_{i\ne j}\Biggl(z_{a+c_i}z_{b+c_j}\sha\bigsha_{k\ne i,j}z_{c_k}\Biggr)
\end{align*}
and use the inductive hypothesis to obtain
\[
 0=Z_{\A}(z_az_b)Z_{\A}(z_{c_1}\sha\dotsm\sha z_{c_n})
 =Z_{\A}(z_az_b\sha z_{c_1}\sha\dotsm\sha z_{c_n});
\]
here the inductive hypothesis applies
because adding an even integer does not change parity.
The key to the proof for general $m$ given below is to
find a generalization of the above identity for $m\ge2$.

\begin{proof}[Proof of Lemma~\ref{lem:step1}]
 We prove $P_{m,n}$ by induction on $n$, assuming the base case $n=0$.
 Let $n\ge1$ and assume $P_{m,n'}$ for all integers $n'$ with $0\le n'<n$.
 We only prove that $Z_{\A}(z_{\ve{a}})=0$
 for all $\ve{a}=(a_1,\dots,a_m;b_1,\dots,b_m;c_1,\dots,c_n)\in I_{m,n}$,
 because $\bar{Z}_{\A}(z_{\ve{a}})=0$ can be proved in a similar fashion.

 Let $G$ be a spanning subgraph,
 with all degrees at most $1$, of the complete bipartite graph
 on the vertex set
 $\{a_1,b_1,\dots,a_m,b_m\}\cup\{c_1,\dots,c_n\}$;
 the $2m+n$ vertices are regarded as distinct even if
 some of them are equal as integers.
 Define $a_i'=a_i$ if the vertex $a_i$ is isolated;
 $a_i'=a_i+c_k$ if the vertices $a_i$ and $c_k$ are adjacent.
 Define $b_j'$ in a similar manner.
 Write $c_1',\dots,c_l'$ for the isolated vertices among $c_1,\dots,c_n$.
 Then we have
 \[
  z_{a_1}z_{b_1}\dotsm z_{a_m}z_{b_m}*
  (z_{c_1}\sha\dotsm\sha z_{c_n})
  =\sum_{G}
   (z_{a_1'}z_{b_1'}\dotsm z_{a_m'}z_{b_m'}\sha z_{c_1'}\sha\dotsm\sha z_{c_l'}),
 \]
 where $G$ runs over all such subgraphs.

 Replacing $a_i$ with $a_{\sigma(i)}$ and $b_j$ with $b_{\tau(j)}$,
 and summing over all $\sigma,\tau\in\sym_m$,
 we obtain
 \[
  z_{(a_1,\dots,a_m;b_1,\dots,b_m;\emptyset)}*(z_{c_1}\sha\dotsm\sha z_{c_n})\\
  =\sum_{G}z_{(a_1',\dots,a_m';b_1',\dots,b_m';c_1',\dots,c_l')}.
 \]

 Let us see what happens when we apply $Z_{\A}$ to this equation.
 The left-hand side is obviously $0$.
 In the right-hand side,
 the graph $G$ with no edge yields $Z_{\A}(z_{\ve{a}})$ and
 all the other terms vanish by the inductive hypothesis
 because $(a_1',\dots,a_m';b_1',\dots,b_m';c_1',\dots,c_l')\in I_{m,l}$ with $l<n$
 when $G$ has at least one edge.
 Hence we conclude that $Z_{\A}(z_{\ve{a}})=0$.
\end{proof}

\subsection{Proof of Lemma~\ref{lem:step2}}
\begin{prop}[{\cite[Theorem~4.5]{Hoffman}}]\label{prop:reversal}
 Let $k_1,\dots,k_n$ be positive integers. Then
 \begin{align*}
  \zeta_{\A}(k_n,\dots,k_1)&=(-1)^{k_1+\dots+k_n}\zeta_{\A}(k_1,\dots,k_n),\\
  \zeta_{\A}^{\star}(k_n,\dots,k_1)
  &=(-1)^{k_1+\dots+k_n}\zeta_{\A}^{\star}(k_1,\dots,k_n).
 \end{align*}
\end{prop}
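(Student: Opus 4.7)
The plan is to exploit, for each prime $p$, the involution $m\mapsto p-m$ on $\{1,\dots,p-1\}$, which reverses the usual order. Substituting $m_i=p-m'_{n+1-i}$ in the sum defining $\zeta_{\A}(k_1,\dots,k_n)$ sends a strictly decreasing chain $p>m_1>\dots>m_n\ge 1$ bijectively to another strictly decreasing chain $p>m'_1>\dots>m'_n\ge 1$, and analogously the weakly decreasing chains underlying $\zeta_{\A}^{\star}$ are sent to weakly decreasing chains.

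For every prime $p$ exceeding each $k_i$, all relevant residues are invertible modulo $p$, and the elementary expansion
\[
 \frac{1}{(p-m'_{n+1-i})^{k_i}}\equiv\frac{(-1)^{k_i}}{(m'_{n+1-i})^{k_i}}\pmod{p}
\]
yields, after multiplying over $i=1,\dots,n$, an overall sign $(-1)^{k_1+\dots+k_n}$ together with the denominator $(m'_n)^{k_1}(m'_{n-1})^{k_2}\dotsm(m'_1)^{k_n}$. Relabelling the summation variable by $j\mapsto n+1-j$ so that $m'_j$ carries the exponent $k_{n+1-j}$, the right-hand side becomes $(-1)^{k_1+\dots+k_n}\zeta_{\A}(k_n,\dots,k_1)$ modulo $p$. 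Since this congruence holds for all but finitely many primes, it passes to an equality in $\A$, which is equivalent to the asserted identity because the sign squares to $1$.

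The star version is settled by the same substitution, since the order-reversing involution also carries weak inequalities to weak inequalities; the modulo-$p$ calculation is otherwise identical. No serious obstacle arises: the entire content of the proof is the order-reversing bijection $m\mapsto p-m$ combined with $(p-m)^k\equiv(-m)^k\pmod{p}$, and the only care required is in the bookkeeping of the index reversal and the tracking of the sign.
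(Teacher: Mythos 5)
Your proof is correct and is essentially the paper's own argument: both rest on the order-reversing substitution $m\mapsto p-m$ on $\{1,\dots,p-1\}$ together with $(p-m)^k\equiv(-m)^k\pmod{p}$, differing only in whether one starts from $\zeta_{\A}(k_1,\dots,k_n)$ or from $\zeta_{\A}(k_n,\dots,k_1)$. The index bookkeeping and the sign are handled correctly (your restriction to primes $p$ exceeding the $k_i$ is harmless but unnecessary, since the residues $1,\dots,p-1$ are invertible modulo $p$ in any case).
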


\begin{proof}
 We have
 \begin{align*}
  \zeta_{\A}(k_n,\dots,k_1)
  &=\sum_{p>m_n>\dots>m_1\ge1}\frac{1}{m_n^{k_n}\dotsm m_1^{k_1}}\\
  &=\sum_{p>\tilde{m}_1>\dots>\tilde{m}_n\ge1}
    \frac{1}{(p-\tilde{m}_n)^{k_n}\dotsm(p-\tilde{m}_1)^{k_1}}\\
  &=(-1)^{k_1+\dots+k_n}\sum_{p>\tilde{m}_1>\dots>\tilde{m}_n\ge1}
    \frac{1}{\tilde{m}_1^{k_1}\dotsm\tilde{m}_n^{k_n}}\\
  &=(-1)^{k_1+\dots+k_n}\zeta_{\A}(k_1,\dots,k_n).
 \end{align*}
 The other equation can be proved in the same manner.
\end{proof}

\begin{defn}
 Define a $\Q$-linear transformation $d\colon\HH^1\to\HH^1$
 by setting $d(1)=1$ and
 \[
  d(z_{k_1}\dotsm z_{k_n})
  =\sum_{m=1}^{n}\sum_{0=i_0<i_1<\dots<i_m=n}
   z_{k_{i_0+1}+\dots+k_{i_1}}\dotsm z_{k_{i_{m-1}+1}+\dots+k_{i_m}}
 \]
 for positive integers $k_1,\dots,k_n$.
\end{defn}

\begin{ex}
 We have $d(z_k)=z_k$ and $d(z_kz_l)=z_kz_l+z_{k+l}$.
\end{ex}

As illustrated by
\begin{align*}
 \bar{Z}_{\A}(z_kz_l)
 &=\zeta_{\A}^{\star}(k,l)=\sum_{p>m\ge n\ge1}\frac{1}{m^kn^l}
 =\Biggl(\sum_{p>m>n\ge1}+\sum_{p>m=n\ge1}\Biggr)\frac{1}{m^kn^l}\\
 &=\zeta_{\A}(k,l)+\zeta_{\A}(k+l)=Z_{\A}(z_kz_l+z_{k+l})=Z_{\A}\bigl(d(z_kz_l)\bigr),
\end{align*}
the transformation $d$ has been defined so that $\bar{Z}_{\A}=Z_{\A}\circ d$:
\begin{prop}
 We have $\bar{Z}_{\A}=Z_{\A}\circ d$, i.e.\
 $\bar{Z}_{\A}(w)=Z_{\A}\bigl(d(w)\bigr)$ for all $w\in\HH^1$.
\end{prop}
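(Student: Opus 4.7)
The plan is to reduce to monomials by $\Q$-linearity and then verify the identity by a direct combinatorial rewriting of the sum defining $\zeta_\A^\star$. Since both $\bar Z_\A$ and $Z_\A\circ d$ agree on $1$, it suffices to show
\[
 \zeta_\A^\star(k_1,\dots,k_n)
 =Z_\A\bigl(d(z_{k_1}\dotsm z_{k_n})\bigr)
\]
for positive integers $k_1,\dots,k_n$.

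First I would partition the indexing set $\{(m_1,\dots,m_n):p>m_1\ge\dots\ge m_n\ge1\}$ of the sum defining $\zeta_\A^\star(k_1,\dots,k_n)$ according to the pattern of equalities among consecutive $m_i$'s. Concretely, every weak chain $m_1\ge m_2\ge\dots\ge m_n$ decomposes uniquely into maximal runs on which the $m_i$ are constant: if the strict descents occur exactly after positions $i_1<\dots<i_{m-1}$ (and we set $i_0=0$, $i_m=n$), then the chain is encoded by the block data $0=i_0<i_1<\dots<i_m=n$ together with a strict chain $n_1>n_2>\dots>n_m\ge 1$ of values with $n_j=m_{i_{j-1}+1}=\dots=m_{i_j}$.

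Next I would factor the summand along these blocks. On block $j$ the product $m_{i_{j-1}+1}^{k_{i_{j-1}+1}}\dotsm m_{i_j}^{k_{i_j}}$ collapses to $n_j^{\,k_{i_{j-1}+1}+\dots+k_{i_j}}$, so summing first over $n_1>\dots>n_m\ge 1$ with $n_1<p$ gives exactly $\zeta_\A(k_{i_0+1}+\dots+k_{i_1},\,\dots,\,k_{i_{m-1}+1}+\dots+k_{i_m})$. Summing over $m$ and over the block boundaries $0=i_0<i_1<\dots<i_m=n$ then reproduces the definition of $d(z_{k_1}\dotsm z_{k_n})$, after which applying $Z_\A$ yields the desired right-hand side.

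There is no genuine obstacle here: the argument is the standard ``refine weak inequalities into strict inequalities plus a composition'' computation that motivates the definition of $d$. The only thing requiring care is the bookkeeping of the bijection in the previous paragraph, particularly checking that the indices $i_0,\dots,i_m$ range over all compositions of $n$ into $m$ parts and that each weak chain is counted exactly once.
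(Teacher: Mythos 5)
Your proof is correct and is essentially the same argument the paper has in mind: the paper only illustrates the identity with the $n=2$ case $\zeta_{\A}^{\star}(k,l)=\zeta_{\A}(k,l)+\zeta_{\A}(k+l)$ and leaves the general statement unproved, and your decomposition of the weak chain $m_1\ge\dots\ge m_n$ into maximal constant blocks indexed by compositions $0=i_0<i_1<\dots<i_m=n$ is precisely the intended generalization. The bookkeeping you flag is routine and matches the definition of $d$ term by term.
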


\begin{lem}\label{lem:semi-reversal}
 Let $k_1,\dots,k_l$ be positive integers, where $l\ge1$.
 Then
 \[
  \sum_{j=0}^{l}(-1)^jd(z_{k_j}\dotsm z_{k_1})* z_{k_{j+1}}\dotsm z_{k_l}=0.
 \]
\end{lem}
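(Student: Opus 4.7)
The plan is induction on $l$, exploiting the recursive structure of both the map $d$ and the harmonic product $*$.

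The base case $l=1$ is immediate: the two-term sum $d(1)*z_{k_1} - d(z_{k_1})$ collapses to $z_{k_1} - z_{k_1} = 0$, since $d(1)=1$ and $d(z_{k_1})=z_{k_1}$.

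For the inductive step, fix $l\ge 2$ and assume the identity holds for all strictly shorter sequences of indices. My main computational tool will be the recursion obtained by splitting off the first block in the composition defining $d$,
\[
 d(z_{k_j}\dotsm z_{k_1}) = \sum_{s=0}^{j-1} z_{k_j + \dotsm + k_{s+1}}\, d(z_{k_s}\dotsm z_{k_1}) \qquad (j \ge 1),
\]
combined with the three-term defining recursion of $*$. Substituting this expansion into $\sum_{j=0}^l (-1)^j d(z_{k_j}\dotsm z_{k_1}) * z_{k_{j+1}}\dotsm z_{k_l}$, swapping the order of summation over $(j,s)$, and regrouping terms should reveal telescoping cancellations among the alternating signs and reduce the identity to shorter instances of the form $\sum_{j'=0}^{l'}(-1)^{j'} d(z_{k_{j'}}\dotsm z_{k_1}) * z_{k_{j'+1}}\dotsm z_{k_{l'}} = 0$ with $l' < l$, which vanish by the inductive hypothesis.

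The principal obstacle will be bookkeeping: the interplay between the recursion for $d$ (summing over compositions) and the three-term recursion for $*$ produces a proliferation of terms whose pairwise cancellations, though ultimately forced by the alternating signs, must be tracked carefully via reindexing. If the direct induction becomes unwieldy, a conceptual alternative is to recognize the identity as the antipode relation $m \circ (S \otimes \mathrm{id}) \circ \Delta_{\mathrm{dec}} = \eta \circ \epsilon$ in the connected graded Hopf algebra $(\HH^1,\,*,\,\Delta_{\mathrm{dec}})$ equipped with the deconcatenation coproduct, whose antipode one can verify to be $S(z_{k_1}\dotsm z_{k_n}) = (-1)^n d(z_{k_n}\dotsm z_{k_1})$; this reframing deduces the lemma from a standard fact about connected Hopf algebras, at the cost of first establishing the Hopf structure.
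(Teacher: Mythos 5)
You have correctly identified what this identity \emph{is}: it is the left antipode relation for the quasi-shuffle Hopf algebra $(\HH^1,*,\Delta_{\mathrm{dec}})$, your formula $S(z_{k_1}\dotsm z_{k_n})=(-1)^n d(z_{k_n}\dotsm z_{k_1})$ checks out on small cases, your recursion for $d$ obtained by splitting off the first block is correct, and the base case $l=1$ is fine. Note, for calibration, that the paper does not prove this lemma at all: it cites Kawashima (Proposition~7.1) and Ihara--Kajikawa--Ohno--Okuda (Proposition~6), where the verification occupies a nontrivial amount of work.

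The gap is that your inductive step is a plan, not an argument, and the plan as stated does not go through in the form you describe. After substituting $d(z_{k_j}\dotsm z_{k_1})=\sum_{s=0}^{j-1}z_{k_j+\dots+k_{s+1}}d(z_{k_s}\dotsm z_{k_1})$ and expanding the three-term recursion for $*$, the resulting terms carry merged letters $z_{k_j+\dots+k_{s+1}}$ sitting \emph{in front of} a harmonic product, and they do not reassemble into ``shorter instances of the same identity'' $\sum_{j'}(-1)^{j'}d(z_{k_{j'}}\dotsm z_{k_1})*z_{k_{j'+1}}\dotsm z_{k_{l'}}$; organizing the cancellation requires auxiliary operators (e.g.\ ``add $a$ to the first letter of every monomial'') and a genuine computation, which is exactly the content of the cited propositions. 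The Hopf-algebraic escape route is circular as formulated: in a connected graded bialgebra the antipode is \emph{defined} as the convolution inverse of the identity, so ``verifying that the antipode is $(-1)^n d(z_{k_n}\dotsm z_{k_1})$'' means verifying $\sum_j S(z_{k_1}\dotsm z_{k_j})*z_{k_{j+1}}\dotsm z_{k_l}=0$ --- the lemma itself --- or its right-sided mirror, which is equivalent to the lemma under the substitution $k_i\mapsto k_{l+1-i}$ and commutativity of $*$. To make that route honest you would either have to cite Hoffman's antipode theorem for quasi-shuffle algebras as a black box, or reprove it, e.g.\ via his exponential isomorphism with the shuffle algebra where the antipode is plain signed reversal. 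As it stands, the proposal establishes the statement only for $l=1$.
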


\begin{proof}
 The lemma is proved in \cite[Proposition~7.1]{Kawashima};
 it also follows from \cite[Proposition~6]{IKOO},
 where our $d$ is denoted by $S$ and the coefficient $(-1)^j$ is missing.
\end{proof}

\begin{rem}
 When $l=0$, the left-hand side of the equation in Lemma~\ref{lem:semi-reversal}
 should naturally be interpreted as $1$ rather than $0$,
 hence the odd-looking assumption that $l\ge1$.
\end{rem}

For $k\in\Z_{\ge0}$, we write $[k]=\{i\in\Z\mid 1\le i\le k\}$.
For sets $X$ and $Y$ of the same cardinality,
we write $\Bij(X,Y)$ for the set of all bijections from $X$ to $Y$.

\begin{proof}[Proof of Lemma~\ref{lem:step2}]
 Let $\ve{a}=(a_1,\dots,a_m;b_1,\dots,b_m;\emptyset)\in I_{m,0}$.
 Then for each $(\sigma,\tau)\in\sym_m^2$,
 applying Lemma~\ref{lem:semi-reversal} to $l=2m$ and
 $(k_1,\dots,k_l)=(a_{\sigma(1)},b_{\tau(1)},\dots,a_{\sigma(m)},b_{\tau(m)})$
 gives
 \begin{align*}
  &\sum_{i=0}^{m}
   d(z_{b_{\tau(i)}}z_{a_{\sigma(i)}}\dotsm z_{b_{\tau(1)}}z_{a_{\sigma(1)}})
  *z_{a_{\sigma(i+1)}}z_{b_{\tau(i+1)}}\dotsm z_{a_{\sigma(m)}}z_{b_{\tau(m)}}\\
  &-\sum_{i=1}^{m}
   d(z_{a_{\sigma(i)}}z_{b_{\tau(i-1)}}z_{a_{\sigma(i-1)}}
     \dotsm z_{b_{\tau(1)}}z_{a_{\sigma(1)}})
  *z_{b_{\tau(i)}}z_{a_{\sigma(i+1)}}z_{b_{\tau(i+1)}}
   \dotsm z_{a_{\sigma(m)}}z_{b_{\tau(m)}}\\
  &=0.
 \end{align*}
 By summing over all $(\sigma,\tau)\in\sym_m^2$ and applying $Z_{\A}$, we obtain
 \begin{align*}
  &\sum_{i=0}^{m}\sum_{\sigma,\tau\in\sym_m}
  \zeta_{\A}^{\star}(b_{\tau(i)},a_{\sigma(i)},\dots,b_{\tau(1)},a_{\sigma(1)})
  \zeta_{\A}(a_{\sigma(i+1)},b_{\tau(i+1)},\dots,a_{\sigma(m)},b_{\tau(m)})\\
  &-\sum_{i=1}^{m}\sum_{\sigma,\tau\in\sym_m}
  \zeta_{\A}^{\star}
  (a_{\sigma(i)},b_{\tau(i-1)},a_{\sigma(i-1)},\dots,b_{\tau(1)},a_{\sigma(1)})
  \zeta_{\A}
  (b_{\tau(i)},a_{\sigma(i+1)},b_{\tau(i+1)},\dots,a_{\sigma(m)},b_{\tau(m)})\\
  &=0.
 \end{align*}
 For simplicity, we write the left-hand side as
 $\sum_{i=0}^{m}P_i-\sum_{i=1}^{m}Q_i$.
 Since $P_0=Z_{\A}(z_{\ve{a}})$ and $P_m=\bar{Z}_{\A}(z_{\ve{a}})$
 by Proposition~\ref{prop:reversal},
 it suffices to show that
 $P_i=0$ for $i=1,\dots,m-1$ and
 $Q_i=0$ for $i=1,\dots,m$.

 For $i=1,\dots,m-1$, we have
 \begin{align*}
  P_i
  &=\sum_{\sigma,\tau\in\sym_m}
  \zeta_{\A}^{\star}(b_{\tau(i)},a_{\sigma(i)},\dots,b_{\tau(1)},a_{\sigma(1)})
  \zeta_{\A}(a_{\sigma(i+1)},b_{\tau(i+1)},\dots,a_{\sigma(m)},b_{\tau(m)})\\
  &=\sum_{\substack{A,B\subset[m]\\\#A=\#B=i}}
    \Biggl(\sum_{\substack{\sigma'\in\Bij([i],A)\\\tau'\in\Bij([i],B)}}
    \zeta_{\A}^{\star}(b_{\tau'(i)},a_{\sigma'(i)},\dots,b_{\tau'(1)},a_{\sigma'(1)})\Biggr)\\
  &\qquad\times\Biggl(
   \sum_{\substack{\sigma''\in\Bij([m-i],[m]\setminus A)\\\tau''\in\Bij([m-i],[m]\setminus B)}}
   \zeta_{\A}(a_{\sigma''(1)},b_{\tau''(1)},\dots,a_{\sigma''(m-i)},b_{\tau''(m-i)})\Biggr)\\
  &=0
 \end{align*}
 by the hypothesis.
 In a similar fashion, for $i=1,\dots,m$, we have
 \begin{align*}
  Q_i
  &=\sum_{\sigma,\tau\in\sym_m}
  \zeta_{\A}^{\star}(a_{\sigma(i)},b_{\tau(i-1)},a_{\sigma(i-1)},\dots,b_{\tau(1)},a_{\sigma(1)})
  \zeta_{\A}(b_{\tau(i)},a_{\sigma(i+1)},b_{\tau(i+1)},\dots,a_{\sigma(m)},b_{\tau(m)})\\
  &=\sum_{\substack{A,B\subset[m]\\\#A=i\\\#B=i-1}}
    \Biggl(\sum_{\substack{\sigma'\in\Bij([i],A)\\\tau'\in\Bij([i-1],B)}}
    \zeta_{\A}^{\star}(a_{\sigma'(i)},b_{\tau'(i-1)},a_{\sigma'(i-1)},\dots,b_{\tau'(1)},a_{\sigma'(1)})\Biggr)\\
  &\qquad\times\Biggl(
   \sum_{\substack{\sigma''\in\Bij([m-i],[m]\setminus A)\\\tau''\in\Bij([m-i+1],[m]\setminus B)}}
   \zeta_{\A}(b_{\tau''(1)},a_{\sigma''(1)},b_{\tau''(2)},\dots,a_{\sigma''(m-i)},b_{\tau''(m-i+1)})\Biggr)\\
  &=0
 \end{align*}
 because of Proposition~\ref{prop:reversal} and the assumption that $a_1,\dots,a_m,b_1,\dots,b_m$ are all odd,
 and the proof is complete.
\end{proof}

\subsection{Proof of Lemma~\ref{lem:step3}}
Let $a_1,\dots,a_m$ and $b_1,\dots,b_m$ be positive integers, and
write $X$ for the multiset consisting of the $2m$ positive integers.
For $P\subset X$, 
denote by $s(P)$ the sum of the elements of $P$;
denote by $\mu_a(P)$ and $\mu_b(P)$ the numbers of $a$'s and $b$'s
contained in $P$ respectively;
define $\lvert P\rvert=\mu_a(P)!\mu_b(P)!$.

Write $\PP$ for the set of all partitions $\Pi$ of $X$
such that $\lvert\mu_a(P)-\mu_b(P)\rvert\le1$ for every $P\in\Pi$.
For $\Pi\in\PP$, write
$\Pi=\{A_1,\dots,A_k,B_1,\dots,B_k,C_1,\dots,C_l\}$
where $\mu_a(A_i)-\mu_b(A_i)=1$,
$\mu_a(B_i)-\mu_b(B_i)=-1$, and $\mu_a(C_j)=\mu_b(C_j)$,
and define
\[
 z_{\Pi}
 =\Biggl(\prod_{P\in\Pi}\lvert P\rvert\Biggr)
 \sum_{\sigma,\tau\in\sym_k}
 z_{s(A_{\sigma(1)})}z_{s(B_{\tau(1)})}\dotsm z_{s(A_{\sigma(k)})}z_{s(B_{\tau(k)})}
 \sha z_{s(C_1)}\sha\dotsm\sha z_{s(C_l)}.
\]

\begin{ex}
 If $m=1$, then $\PP$ consists of the following two elements:
 \begin{itemize}
  \item $\Pi_1$ consisting of $C_1=\{a_1,b_1\}$,
  for which $z_{\Pi_1}=z_{a_1+b_1}$;
  \item $\Pi_2$ consisting of $A_1=\{a_1\}$ and $B_1=\{b_1\}$,
  for which $z_{\Pi_2}=z_{a_1}z_{b_1}$.
 \end{itemize}
 We thus have
 \[
  \sum_{\Pi\in\PP}z_{\Pi}=z_{a_1}z_{b_1}+z_{a_1+b_1}=d(z_{a_1}z_{b_1}).
 \]

 If $m=2$, then $\PP$ consists of the following 12 elements:
 \begin{itemize}
  \item $\Pi_1$ consisting of $C_1=\{a_1,b_1,a_2,b_2\}$,
  for which $z_{\Pi_1}=4z_{a_1+b_1+a_2+b_2}$;
  \item $\Pi_2$ consisting of $A_1=\{a_1,b_1,a_2\}$ and $B_1=\{b_2\}$,
  for which $z_{\Pi_2}=2z_{a_1+b_1+a_2}z_{b_2}$;
  \item $\Pi_3$ consisting of $A_1=\{a_1,b_2,a_2\}$ and $B_1=\{b_1\}$,
  for which $z_{\Pi_3}=2z_{a_1+b_2+a_2}z_{b_1}$;
  \item $\Pi_4$ consisting of $A_1=\{a_1\}$ and $B_1=\{b_1,a_2,b_2\}$,
  for which $z_{\Pi_4}=2z_{a_1}z_{b_1+a_2+b_2}$;
  \item $\Pi_5$ consisting of $A_1=\{a_2\}$ and $B_1=\{b_1,a_1,b_2\}$,
  for which $z_{\Pi_5}=2z_{a_2}z_{b_1+a_1+b_2}$;
  \item $\Pi_6$ consisting of $C_1=\{a_1,b_1\}$ and $C_2=\{a_2,b_2\}$,
  for which $z_{\Pi_6}=z_{a_1+b_1}\sha z_{a_2+b_2}$;
  \item $\Pi_7$ consisting of $C_1=\{a_1,b_2\}$ and $C_2=\{a_2,b_1\}$,
  for which $z_{\Pi_7}=z_{a_1+b_2}\sha z_{a_2+b_1}$;
  \item $\Pi_8$ consisting of $A_1=\{a_1\}$, $B_1=\{b_1\}$, and $C_1=\{a_2,b_2\}$,
  for which $z_{\Pi_8}=z_{a_1}z_{b_1}\sha z_{a_2+b_2}$;
  \item $\Pi_9$ consisting of $A_1=\{a_1\}$, $B_1=\{b_2\}$, and $C_1=\{a_2,b_1\}$,
  for which $z_{\Pi_9}=z_{a_1}z_{b_2}\sha z_{a_2+b_1}$;
  \item $\Pi_{10}$ consisting of $A_1=\{a_2\}$, $B_1=\{b_1\}$, and $C_1=\{a_1,b_2\}$,
  for which $z_{\Pi_{10}}=z_{a_2}z_{b_1}\sha z_{a_1+b_2}$;
  \item $\Pi_{11}$ consisting of $A_1=\{a_2\}$, $B_1=\{b_2\}$, and $C_1=\{a_1,b_1\}$,
  for which $z_{\Pi_{11}}=z_{a_2}z_{b_2}\sha z_{a_1+b_1}$;
  \item $\Pi_{12}$ consisting of $A_1=\{a_1\}$, $A_2=\{a_2\}$,
  $B_1=\{b_1\}$, and $B_2=\{b_2\}$,
  for which $z_{\Pi_{12}}=z_{a_1}z_{b_1}z_{a_2}z_{b_2}+z_{a_1}z_{b_2}z_{a_2}z_{b_1}
  +z_{a_2}z_{b_1}z_{a_1}z_{b_2}+z_{a_2}z_{b_2}z_{a_1}z_{b_1}$.
 \end{itemize}
 We thus have
 \begin{align*}
  z_{\Pi_1}&=\sum_{\sigma,\tau\in\sym_2}
  z_{a_{\sigma(1)}+b_{\tau(1)}+a_{\sigma(2)}+b_{\tau(2)}},\\
  z_{\Pi_2}+z_{\Pi_3}&=\sum_{\sigma,\tau\in\sym_2}
  z_{a_{\sigma(1)}+b_{\tau(1)}+a_{\sigma(2)}}z_{b_{\tau(2)}},\\
  z_{\Pi_4}+z_{\Pi_5}&=\sum_{\sigma,\tau\in\sym_2}
  z_{a_{\sigma(1)}}z_{b_{\tau(1)}+a_{\sigma(2)}+b_{\tau(2)}},\\
  z_{\Pi_6}+z_{\Pi_7}&=\sum_{\sigma,\tau\in\sym_2}
  z_{a_{\sigma(1)}+b_{\tau(1)}}z_{a_{\sigma(2)}+b_{\tau(2)}},\\
  z_{\Pi_8}+\dots+z_{\Pi_{11}}&=\sum_{\sigma,\tau\in\sym_2}
  (z_{a_{\sigma(1)}+b_{\tau(1)}}z_{a_{\sigma(2)}}z_{b_{\tau(2)}}+
   z_{a_{\sigma(1)}}z_{b_{\tau(1)}+a_{\sigma(2)}}z_{b_{\tau(2)}}+
   z_{a_{\sigma(1)}}z_{b_{\tau(1)}}z_{a_{\sigma(2)}+b_{\tau(2)}}),\\
  z_{\Pi_{12}}&=\sum_{\sigma,\tau\in\sym_2}
  z_{a_{\sigma(1)}}z_{b_{\tau(1)}}z_{a_{\sigma(2)}}z_{b_{\tau(2)}}
 \end{align*}
 and so
 \[
  \sum_{\Pi\in\PP}z_{\Pi}=\sum_{\sigma,\tau\in\sym_2}
  d(z_{a_{\sigma(1)}}z_{b_{\tau(1)}}z_{a_{\sigma(2)}}z_{b_{\tau(2)}})
 \]
\end{ex}

\begin{lem}\label{lem:partition}
 We have
 \[
  \sum_{\Pi\in\PP}z_{\Pi}
  =\sum_{\sigma,\tau\in\sym_m}
  d(z_{a_{\sigma(1)}}z_{b_{\tau(1)}}\dotsm z_{a_{\sigma(m)}}z_{b_{\tau(m)}}).
 \]
\end{lem}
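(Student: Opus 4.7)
The plan is to expand both sides as $\Q$-linear combinations of basis words in $\HH^1$ and to match the coefficients by a counting bijection. On the right-hand side, the definition of $d$ gives
\[
 d(z_{a_{\sigma(1)}}z_{b_{\tau(1)}}\dotsm z_{a_{\sigma(m)}}z_{b_{\tau(m)}})
 =\sum_{0=i_0<i_1<\dots<i_r=2m}z_{s_1}\dotsm z_{s_r},
\]
where $s_j$ is the sum of $a_{\sigma(k)}$ over those $k$ with $2k-1\in(i_{j-1},i_j]$ together with $b_{\tau(k)}$ over those $k$ with $2k\in(i_{j-1},i_j]$. Thus the right-hand side of the lemma is indexed by triples $(\sigma,\tau,(i_0,\dots,i_r))$ with $\sigma,\tau\in\sym_m$ and $(i_0,\dots,i_r)$ a composition of $2m$.

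To each such triple I would attach the partition $\Pi=\{P_1,\dots,P_r\}$ of $X$ defined by $P_j=\{a_{\sigma(k)}:2k-1\in(i_{j-1},i_j]\}\cup\{b_{\tau(k)}:2k\in(i_{j-1},i_j]\}$. Since any integer interval $(i_{j-1},i_j]$ contains numbers of odd and even integers that differ by at most $1$, we have $|\mu_a(P_j)-\mu_b(P_j)|\le1$, so $\Pi\in\PP$; moreover $s_j=s(P_j)$. I would then count, for a fixed $\Pi\in\PP$ and a fixed ordering $(P_1,\dots,P_r)$ of its blocks, how many triples produce this data. The composition is forced by the sizes $|P_j|=\mu_a(P_j)+\mu_b(P_j)$, and once the composition is fixed, choosing $\sigma$ amounts to choosing, independently for each $j$, a bijection between the $\mu_a(P_j)$ odd positions of $(i_{j-1},i_j]$ and the indices $i$ with $a_i\in P_j$; similarly for $\tau$. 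This yields exactly $\prod_{P\in\Pi}\mu_a(P)!\,\mu_b(P)!=\prod_{P\in\Pi}|P|$ triples per ordering.

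Finally, I would characterise which orderings of the blocks of $\Pi$ actually arise. A parity argument on the consecutive intervals of $[2m]$ shows that an $A$-block must start and end at odd positions, a $B$-block must start and end at even positions, and $C$-blocks impose no parity constraint; since the first block begins at position $1$ and the last ends at position $2m$, the subsequence of $A$'s and $B$'s in the ordering must alternate as $A,B,A,B,\dots,A,B$. On the other hand, expanding the shuffle products in the definition of $z_\Pi$, the factor $\sum_{\sigma,\tau\in\sym_k}z_{s(A_{\sigma(1)})}z_{s(B_{\tau(1)})}\dotsm z_{s(A_{\sigma(k)})}z_{s(B_{\tau(k)})}$ enumerates all alternating arrangements of the $A$'s and $B$'s, and shuffling with $z_{s(C_1)}\sha\dots\sha z_{s(C_l)}$ inserts the labeled $C$-blocks in every possible position and order, producing each admissible ordering with multiplicity one. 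Multiplying by $\prod_{P\in\Pi}|P|$ identifies the right-hand side of the lemma with $\sum_{\Pi\in\PP}z_\Pi$, as desired. The main obstacle I anticipate is the parity bookkeeping that isolates the admissible orderings, together with the careful verification that the bijection between triples $(\sigma,\tau,(i_0,\dots,i_r))$ and the data $(\Pi,\text{admissible ordering},\text{intra-block assignments})$ really is weight-preserving with multiplicity $\prod_P|P|$.
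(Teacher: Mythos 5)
Your proposal is correct and follows essentially the same route as the paper's proof: both sides are expanded into monomials and the coefficient $\prod_{P\in\Pi}\mu_a(P)!\,\mu_b(P)!=\prod_{P\in\Pi}\lvert P\rvert$ is obtained by counting the pairs $(\sigma,\tau)$ (together with the cut positions of $d$) that produce a given partition. Your version is considerably more explicit than the paper's, which simply asserts that the left-hand side is the expansion of the right-hand side; in particular your parity analysis showing that the $A$- and $B$-blocks must alternate, matching the alternating structure and the shuffle with the $C$-blocks in the definition of $z_{\Pi}$, is exactly the point the paper leaves implicit.
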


\begin{proof}
 Succinctly speaking, the left-hand side is the expansion of the right-hand side.
 To be more precise,
 for each $\Pi\in\PP$,
 each monomial $w$ that appears in the expansion of $z_{\Pi}$
 appears in the right-hand side
 exactly as many times as there are pairs $(\sigma,\tau)\in\sym_m^2$
 for which
 $d(z_{a_{\sigma(1)}}z_{b_{\tau(1)}}\dotsm z_{a_{\sigma(m)}}z_{b_{\tau(m)}})$
 gives rise to the monomial $w$;
 the number of such $\sigma$ is $\prod_{P\in\Pi}\mu_a(P)!$ and
 the number of such $\tau$ is $\prod_{P\in\Pi}\mu_b(P)!$,
 from which it follows that the number of such pairs $(\sigma,\tau)$ is
 \[
  \prod_{P\in\Pi}\mu_a(P)!\cdot\prod_{P\in\Pi}\mu_b(P)!
  =\prod_{P\in\Pi}\lvert P\rvert.
 \]
 This proves the lemma.
\end{proof}

\begin{proof}[Proof of Lemma~\ref{lem:step3}]
 Let $\ve{a}=(a_1,\dots,a_m;b_1,\dots,b_m;\emptyset)\in I_{m,0}$.
 Then Lemma~\ref{lem:partition} shows that
 \[
  \bar{Z}_{\A}(z_{\ve{a}})=Z_{\A}\bigl(d(z_{\ve{a}})\bigr)
  =\sum_{\Pi\in\PP}Z_{\A}(z_{\Pi}).
 \]
 If $\Pi=\bigl\{\{a_1\},\dots,\{a_m\},\{b_1\},\dots,\{b_m\}\bigr\}$,
 then $z_{\Pi}=z_{\ve{a}}$;
 otherwise, $z_{\Pi}$ is an integer multiple of $z_{\ve{b}}$
 for some $\ve{b}\in\bigcup_{1\le m'<m}\bigcup_{n\ge0}I_{m',n}$,
 and so $Z_{\A}(z_{\Pi})=0$ by the hypothesis.
 It follows that $\bar{Z}_{\A}(z_{\ve{a}})=Z_{\A}(z_{\ve{a}})$.
\end{proof}

\section*{Acknowledgements}
The authors would like to thank Masanobu Kaneko and Tatsushi Tanaka
for helpful comments, and Shuji Yamamoto for carefully reading the manuscript
and making suggestions for improving the exposition.


\begin{thebibliography}{99}
 \bibitem{BB}
  D. Bowman and D. M. Bradley,
  \emph{The algebra and combinatorics of shuffles and multiple zeta values},
  J. Combin. Theory Ser. A \textbf{97} (2002), no.~1, 43--61.
 \bibitem{Hoffman_algebra}
  M. E. Hoffman,
  \emph{The algebra of multiple harmonic series},
  J. Algebra \textbf{194} (1997), no.~2, 477--495.
 \bibitem{Hoffman_survey}
  M. E. Hoffman,
  \emph{Algebraic aspects of multiple zeta values},
  Zeta functions, topology and quantum physics, Dev. Math., vol.~14,
  Springer, New York, 2005, pp.~51--73.
 \bibitem{Hoffman}
  M. E. Hoffman,
  \emph{Quasi-symmetric functions and mod $p$ multiple harmonic sums}
  (preprint), math/0401319
 \bibitem{IKOO}
  K. Ihara, J. Kajikawa, Y. Ohno, and J. Okuda,
  \emph{Multiple zeta values vs. multiple zeta-star values},
  J. Algebra \textbf{332} (2011), no.~1, 187--208.
 \bibitem{Kaneko}
  M. Kaneko,
  \emph{Finite multiple zeta values mod $p$ and relations among multiple zeta values},
  S\=urikaisekikenky\=usho K\=oky\=uroku (2012), no.~1813, 27--31,
  Various aspects of multiple zeta values (Japanese) (Kyoto, 2010).
 \bibitem{KZ}
  M. Kaneko and D. Zagier,
  \emph{Finite multiple zeta values},
  in preparation.
 \bibitem{Kawashima}
  G. Kawashima,
  \emph{A class of relations among multiple zeta values},
  J. Number Theory \textbf{129} (2009), no.~4, 755--788.
 \bibitem{KST}
  H. Kondo, S. Saito, and T. Tanaka,
  \emph{The Bowman-Bradley theorem for multiple zeta-star values},
  J. Number Theory \textbf{132} (2012), no.~9, 1984--2002.
 \bibitem{Muneta}
  S. Muneta,
  \emph{A note on evaluations of multiple zeta values},
  Proc. Amer. Math. Soc. \textbf{137} (2009), no.~3, 931--935.
 \bibitem{Zhao}
  J. Zhao,
  \emph{Wolstenholme type theorem for multiple harmonic sums},
  Int. J. Number Theory \textbf{4} (2008), no.~1, 73--106.
 \bibitem{Zudilin}
  V. V. Zudilin,
  \emph{Algebraic relations for multiple zeta values},
  Uspekhi Mat. Nauk \textbf{58} (2003), no.~1 (349), 3--32.
\end{thebibliography}
\end{document}